%
%
%
\documentclass[reqno,12pt]{amsart}
\usepackage{amsmath}
\usepackage{latexsym,amsmath,MnSymbol}
\usepackage{amsthm}
\usepackage{graphics}

\newtheorem{theorem}{Theorem}     

\newtheorem{lemma}[theorem]{Lemma}
\newtheorem{fact}[theorem]{Fact}

\newtheorem{conjecture}[theorem]{Conjecture}

\newtheorem{case}{Case}
\newtheorem{claim}{Claim}[case]


\def\D{\Delta}

\def\eps{\varepsilon}

\def\Z{\mathbb{Z}}
\def\pr{\mathbb{P}}
\def\ex{\mathbb{E}}

\newcounter{rot}


\title[Total Edge Irregularity Strength]
{Total Edge Irregularity Strength of Large Graphs}
\author[F.Pfender]{Florian Pfender}
\keywords{total graph labeling, irregularity strength}


\begin{document}

\begin{abstract}
Let $m:=|E(G)|$ sufficiently large and $s:=\left\lceil\frac{m-1}{3}\right\rceil$.
We show that unless the maximum degree $\Delta > 2s$, there is a weighting $\hat{w}:E\cup V\to \{ 0,1,\ldots,s\}$ 
so that $\hat{w}(uv)+\hat{w}(u)+\hat{w}(v)\ne \hat{w}(u'v')+\hat{w}(u')+\hat{w}(v')$ whenever $uv\ne u'v'$ 
(such a weighting is called {\em total edge irregular}).
This validates a conjecture by Ivan\v{c}o and Jendrol' for large graphs, extending a result by Brandt, Mi\v{s}kuf 
and Rautenbach.
\end{abstract}

\maketitle

\section{Introduction}
Let $G=(V,E)$ be a graph.
In~\cite{BJMR}, Ba\u{c}a, Jendrol', Miller and Ryan define the notion of an {\em edge irregular total $s$-weighting} as a weighting  \[\hat{w}:E\cup V\to \{ 1,2,\ldots,s\}\]
so that 
\[\hat{w}(uv)+\hat{w}(u)+\hat{w}(v)\ne \hat{w}(u'v')+\hat{w}(u')+\hat{w}(v')\] whenever $uv\ne u'v'$ are two different edges of $G$. They also define the {\em total edge irregularity strength} as the minimum $s$ for which there exists such a weighting, denoted by $tes(G)$. If we denote by $\D$ the maximum degree of $G$ and by $m$ the number of edges they note that 
\[
 tes(G)\ge \max\left\{ \frac{m+2}{3},\frac{\D+1}{2}\right\}.
\]
After some more study of $tes(G)$, Ivan\u{c}o and Jendrol' conjecture in~\cite{IJ} that this natural lower bound is sharp for all graphs other than the complete graph on $5$ vertices (which has $tes(K^5)=5$), i.e.,
\begin{conjecture}[Ivan\u{c}o and Jendrol'~\cite{IJ}]\label{con}
 For every graph $G$ with $|E(G)|=m$ and maximum degree $\D$ which is different from $K^5$,
\[
 tes(G)= \max\left\{ \left\lceil\frac{m+2}{3}\right\rceil,\left\lceil\frac{\D+1}{2}\right\rceil\right\}.
\]
\end{conjecture}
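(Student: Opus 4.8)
The plan is to recast the problem and then solve it by a greedy assignment along a carefully chosen order of the edges. Giving a total edge irregular weighting $\hat w\colon E\cup V\to\{0,\dots,s\}$ is the same as choosing a vertex weighting $w\colon V\to\{0,\dots,s\}$ together with an injection $\sigma\colon E\to\{0,\dots,3s\}$ with $w(u)+w(v)\le\sigma(uv)\le w(u)+w(v)+s$ for every edge $uv$; one then sets $\hat w(uv):=\sigma(uv)-w(u)-w(v)$. Since $3s+1\ge m$ with slack at most $2$, the image $\sigma(E)$ is, up to two omitted values, an interval of $m$ consecutive integers, so the task becomes: order the edges $e_1,\dots,e_m$ and choose $w$ so that, writing $f(uv):=w(u)+w(v)$, the $i$-th edge satisfies $f(e_i)\le c+i-1\le f(e_i)+s$ for a fixed shift $c$. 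Ordering the edges by non-decreasing $f$, this succeeds exactly when the ``surplus'' $\phi\mapsto\bigl|\{e\in E:f(e)\le\phi\}\bigr|-\phi$ never varies by more than $s$; equivalently, the edge-sum profile $g(\phi):=\bigl|\{uv\in E:f(uv)=\phi\}\bigr|$ must have partial sums $\sum_{\psi\le\phi}\bigl(g(\psi)-1\bigr)$ lying in a window of width $s$. So everything reduces to choosing $w$ making $g$ ``nearly flat'' in this sense --- and a uniform choice of vertex weights is \emph{not} good enough, since it makes $f$ triangular and the partial sums then range over an interval of length about $\tfrac{4}{3}s$, so genuine care is needed.

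Next I would set up the reductions. One may assume $G$ connected (otherwise split the interval $\{c,\dots,c+m-1\}$ among the components, a routine matter for $m$ large). If moreover $\D\le\delta m$ for a suitably small fixed $\delta>0$, the statement follows from the theorem of Brandt, Mi\v{s}kuf and Rautenbach, which I would quote; so assume $\delta m<\D\le 2s$. Since $\sum_v\deg v=2m$, the set $B$ of ``big'' vertices of degree exceeding $\delta m$ satisfies $|B|\le 2/\delta=O(1)$. The construction then has two parts. \emph{Globally}: fix the weights on the small vertices (almost all of $V$) by an inductively built assignment --- a staircase-type scheme along an edge ordering of $G-B$ --- so that $g$ restricted to $E(G-B)$ stays nearly flat; this is the part extending the Brandt--Mi\v{s}kuf--Rautenbach method. \emph{Locally at $B$}: for each big vertex $v$ reserve a block $\mathcal{B}_v$ of $\deg v$ consecutive target sums, positioned so that every edge at $v$ can reach into it, and assign the $\deg v$ edges at $v$ to $\mathcal{B}_v$ by a system of distinct representatives, where $vu$ may use the targets in $[w(v)+w(u),\,w(v)+w(u)+s]$ meeting $\mathcal{B}_v$. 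By Hall's theorem this SDR exists as soon as the multiset $\{w(u):u\sim v\}$ is not concentrated in any short interval (roughly: each interval $I\subseteq\{0,\dots,s\}$ contains the weights of at most $|I|+s$ neighbours of $v$, with an extra condition near the ends of $\mathcal{B}_v$). Since $|B|=O(1)$, only finitely many such local conditions arise, and I would enforce them by choosing the remaining weights appropriately, redistributing among a positive fraction of each big vertex's neighbours where necessary. Finally, one reads off $\sigma$: place the blocks $\mathcal{B}_v$ at the two ends of $\{0,\dots,3s\}$ so that what remains is an interval, resolve the big-vertex edges and the $O(1)$ edges inside $B$ by the SDR argument, and fill the remaining consecutive targets by sweeping $E(G-B)$ in order of $f$.

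The step I expect to be the main obstacle is reconciling the local conditions at the big vertices with each other and with the global profile. Big vertices may be mutually adjacent and may share neighbours, so the demands that their neighbourhoods be spread out interact; and a vertex $v$ with $\deg v$ close to $2s$ has essentially no slack --- its edges need almost all of the $2s+1$ available values of $w(v)+w(u)+w(vu)$, which forces $\{w(u):u\sim v\}$ to cover nearly a whole length-$s$ interval with prescribed multiplicities, at once pinning the global profile near $f\approx w(v)+(\text{that interval})$ and leaving little room for a second big vertex. Pushing the argument through in this pointwise-tight regime, where a single vertex can be incident with two thirds of all edges, is the crux; it is here that $\D\le 2s$ (rather than merely $\D\le 2s+1$) and the largeness of $m$ are used --- the former provides the minimal slack the SDR needs, and the latter absorbs the $O(|B|)=O(1)$ reserved values and the cost of the redistributions, besides ruling out the exceptional graph $K^5$.
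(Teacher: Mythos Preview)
Two preliminary remarks. First, the statement you target is a \emph{conjecture} in the paper; the paper does not prove it in full. What the paper establishes is Theorem~\ref{main}, the conjecture for $m\ge 7\times 10^{10}$. Your sketch also tacitly assumes $m$ large (you invoke ``the largeness of $m$'' to absorb reserved values and error terms), so at best you are outlining a proof of Theorem~\ref{main}, not of the full conjecture. Second, your recasting --- reduce to a vertex weighting whose edge-sum profile has all partial sums in a window of width $s$ --- is exactly the paper's notion of a \emph{well guarded} weighting, and your big-vertex set $B$ is the paper's $B$.

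From that common starting point the approaches diverge. The paper assigns every vertex of $B$ weight $0$ or $s$, splits $B$ into balanced halves $B_0,B_S$, and runs a four-case analysis on the parameters $e_0=|E(B_0,V')|/m'$ and $e_S=|E(B_S,V')|/m'$. Three ``structured'' cases (one or two dominant vertices, or $B$ touching at least $0.86m'$ of the edges) are handled deterministically via Lemma~\ref{easy}. The generic fourth case is probabilistic: vertices outside $B$ receive independent random weights from a carefully non-uniform $21$-point distribution tuned to $(e_0,e_S)$, and Azuma's inequality shows that with positive probability all forty relevant partial sums land in their windows. Your route is entirely deterministic: a ``staircase'' assignment on $G-B$, reserved contiguous target blocks $\mathcal{B}_v$ for each $v\in B$, and a Hall/SDR argument to fill each block.

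This is a genuinely different idea, but as written it is a plan, not a proof, and the step you yourself flag as the crux --- reconciling the block constraints at different big vertices with each other and with the global profile --- is never carried out. Two concrete obstructions. (i) The staircase scheme on $G-B$ is not specified; you appeal to Brandt--Mi\v{s}kuf--Rautenbach, but you need more than flatness of the $E(G-B)$ profile: for every $v\in B$ the multiset $\{w(u):u\sim v\}$ must be spread in a precise way for Hall's condition, and these spreading constraints across the various $v$ interact with one another and with flatness. This is exactly the tension the paper resolves by randomising --- random weights spread every neighbourhood automatically, and Azuma delivers the profile control. (ii) ``Place the blocks $\mathcal{B}_v$ at the two ends'' does not work once several big vertices have combined degree well above $s$; the blocks cannot all be end-intervals with a contiguous middle left for $E(G-B)$. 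The paper's solution is precisely \emph{not} to reserve contiguous blocks but to let the big-vertex edges interleave with the rest via the weights of their small endpoints. In short, your reduction is correct and the architecture is plausible, but the proposal stops where the real work begins.
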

Conjecture~\ref{con} has been verified for trees in~\cite{IJ}, for complete graphs and complete bipartite graphs by Jendrol', Mi\u{s}kuf and Sot\'{a}k in~\cite{JMS}, and for graphs with a bound on $\D$ by Brandt, Mi\u{s}kuf and Rautenbach in~\cite{BMR} and~\cite{BMR2}:
\begin{theorem}[Brandt et.al~\cite{BMR} and~\cite{BMR2}]\label{brandt}
 For every graph $G$ with $|E(G)|=m$ and maximum degree $\D$, where $\left\lceil\frac{\D+1}{2}\right\rceil\ge \frac{m+2}{3}$ or $\D\le \frac{m}{111000}$,
\[
 tes(G)= \max\left\{ \left\lceil\frac{m+2}{3}\right\rceil,\left\lceil\frac{\D+1}{2}\right\rceil\right\}.
\]
\end{theorem}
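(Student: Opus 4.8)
The lower bound on $tes(G)$ is already recorded above, so only the matching upper bound needs proof: writing $\sigma:=\max\{\lceil(m+2)/3\rceil,\lceil(\Delta+1)/2\rceil\}$, the task is to build an edge irregular total $\sigma$-weighting. I would begin with a reformulation. It suffices to choose a vertex weighting $f:V\to\{1,\dots,\sigma\}$ together with an injection $t:E\to\{3,\dots,3\sigma\}$ with $f(u)+f(v)<t(uv)\le f(u)+f(v)+\sigma$ for every edge $uv$; then $\hat w|_V:=f$ and $\hat w(uv):=t(uv)-f(u)-f(v)$ is the desired weighting, and $3\sigma-2\ge m$ leaves room. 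Once the vertex labels are fixed, each edge $uv$ carries a window $W(uv):=[f(u)+f(v)+1,\,f(u)+f(v)+\sigma]$ of $\sigma$ admissible targets, and one wants a system of distinct representatives for $\{W(uv):uv\in E\}$. Since these are intervals, Hall's theorem reduces this to a single inequality: for every interval $J$ of integers, the number of edges $uv$ with $f(u)+f(v)+1\in J$ should be at most $|J|+\sigma-1$ (equivalently: for no interval $I$ do more than $|I|$ of the windows lie inside $I$). The whole proof comes down to choosing $f$ so that this ``no pile-up'' inequality holds.

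Under the first hypothesis, $\lceil(\Delta+1)/2\rceil\ge(m+2)/3$, we have $\Delta\ge(2m+1)/3$ and $\sigma=\lceil(\Delta+1)/2\rceil$ is enormous. Since the degrees sum to $2m$, only boundedly many vertices have degree anywhere near $\Delta$; let $H$ be the set of those high-degree vertices. Every other vertex has degree well below $\sigma$, and at most $m-\Delta\le\sigma$ edges meet no vertex of $H$. The plan is to choose $f$ so that the heavy ``stars'' get spread out: as $|H|$ is bounded and $\sigma$ is huge, one can label $H$ and its neighbourhood so that for each $v\in H$ the sums $f(v)+f(u)$ over neighbours $u$ occupy many different values --- this is the analogue, for a bounded union of near-stars, of the complete bipartite constructions of Jendrol', Mi\v{s}kuf and Sot\'{a}k~\cite{JMS}. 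The remaining at most $\sigma$ edges cannot by themselves overload any interval, so the Hall inequality need only be verified locally around the bounded set $H$, where it goes through with slack.

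The substantive case is the second hypothesis, $\Delta\le m/111000$, where $\sigma=\lceil(m+2)/3\rceil$, the range $[3,3\sigma]$ has room for only about $m$ targets, and the Hall inequality is binding throughout the range rather than near a few vertices. One must therefore genuinely arrange the labels so that the vertex-sums $f(u)+f(v)$ are spread out. The natural first attempt --- making $f$ constant on the parts of a balanced partition of $V$ --- does not suffice on its own, because edges between the ``middle'' parts accumulate at the diagonal sum; the partition must in addition be organized (via a vertex ordering or layering, exploiting the smallness of $\Delta$) so that the parts whose labels sum to any fixed value span only few edges between them. I expect the main obstacle to be exactly this design-and-verification step: constructing the labelling and then proving the no-pile-up inequality both in the bulk and, more delicately, near the two ends of $[3,3\sigma]$ where the windows are truncated, with the slack lost along the way accounting for a large absolute constant of the shape $111000$. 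A cleaner alternative would be to assign the labels at random and repair the bounded number of violations by a local-correction argument, trading the explicit constant for a shorter proof but resting on the same point: that $\Delta$ is small enough for the repairs not to cascade.
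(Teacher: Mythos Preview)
This theorem is not proved in the paper at all: it is quoted as a known result of Brandt, Mi\v{s}kuf and Rautenbach (references~\cite{BMR} and~\cite{BMR2}) and then used as a black box, in particular to dispose of the regime $\lceil(\Delta+1)/2\rceil\ge(m+2)/3$ before the paper's own arguments begin. So there is no ``paper's own proof'' to compare your proposal against.

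As for the proposal itself, it is an outline rather than a proof, and you say as much. The Hall/interval reformulation is correct and is indeed the backbone of the arguments in~\cite{BMR,BMR2}. But in both regimes the actual content lies precisely in the steps you leave open. In the large-$\Delta$ case your claim that ``only boundedly many vertices have degree anywhere near $\Delta$'' and that ``the Hall inequality need only be verified locally around the bounded set $H$'' hides the real work: one still has to label $H$ and its neighbourhoods so that the windows of the $\Delta$ edges at each heavy vertex do not collide, and then show that the remaining $\le\sigma$ edges can be inserted without overloading any interval --- this requires a concrete construction, not just the observation that there is slack. In the small-$\Delta$ case you explicitly identify the ``design-and-verification step'' as the main obstacle and do not carry it out; the suggestion to randomize and repair is reasonable in spirit (and is close to what~\cite{BMR2} does), but ``the repairs not cascading'' is exactly the part that produces the constant $111000$ and needs a real argument. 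In short: right framework, but the proof is not there yet.
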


%

In this paper, we show the conjecture for all sufficiently large graphs.

\begin{theorem}\label{main}
 Let $G$ be a graph with $m:=|E(G)|\ge 7\times 10^{10}$ and maximum degree $\D$.
Then 
\[
 tes(G)= \max\left\{ \left\lceil\frac{m+2}{3}\right\rceil,\left\lceil\frac{\D+1}{2}\right\rceil\right\}.
\]
\end{theorem}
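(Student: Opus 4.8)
The plan is to establish the nontrivial inequality $tes(G)\le\max\{\lceil\frac{m+2}{3}\rceil,\lceil\frac{\Delta+1}{2}\rceil\}$, the reverse being the lower bound recalled in the introduction. Put $s:=\lceil\frac{m-1}{3}\rceil$, so that $\lceil\frac{m+2}{3}\rceil=s+1$; after shifting all labels down by one it suffices to exhibit a weighting $\hat w\colon E\cup V\to\{0,1,\dots,s\}$ with pairwise distinct edge-sums $\hat w(uv)+\hat w(u)+\hat w(v)$. By Theorem~\ref{brandt} we may assume both $\lceil\frac{\Delta+1}{2}\rceil<\frac{m+2}{3}$, whence $\Delta\le 2s-1$, and $\Delta>\frac{m}{111000}$; thus $\Delta$ is of order $m$, lying between roughly $m/111000$ and $2m/3$. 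We may also assume $G$ is connected: distinct components are handled by reserving for each a block of consecutive target edge-sums inside $\{0,\dots,3s\}$, a routine matter once $m$ is large (note $\sum_i m_i=m\le 3s+1$, and a component keeps the full label set $\{0,\dots,s\}$).

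First I would decouple the vertex labels from the edge labels. Fix $g\colon V\to\{0,\dots,s\}$ and set $h(k):=|\{uv\in E:g(u)+g(v)=k\}|$ for $0\le k\le 2s$. An edge $uv$ can be made to realize any target edge-sum $t\in[\,g(u)+g(v),\,g(u)+g(v)+s\,]$ by taking $\hat w(uv):=t-g(u)-g(v)$; so, sorting the edges by $g(u)+g(v)$ and repeatedly assigning the least unused admissible target, we obtain a greedy interval-scheduling procedure, and a short Hall-type computation shows it succeeds — yielding $m$ distinct edge-sums in $\{0,\dots,3s\}$, hence the weighting we want — as soon as $g$ satisfies the \emph{spread condition}
\[
\sum_{k=a}^{b}h(k)\ \le\ (b-a)+s+1\qquad\text{for all }0\le a\le b\le 2s.
\]
The window $a=0,\,b=2s$ just reads $m\le 3s+1$, so there is essentially no global slack, and none when $m\equiv1\pmod3$; in that case the condition on the windows $[j{+}1,\,2s]$ also forces $\sum_{k\le j}h(k)\ge j+1$, so $h$ must carry mass at both ends of $\{0,\dots,2s\}$. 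Everything now reduces to choosing vertex labels whose edge-sums are well spread.

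It remains to construct such a $g$, and the adversary is a vertex $v$ of large degree: its $d(v)\le 2s-1$ incident edges have sums $g(v)+g(u)$ confined to a window of only $s+1$ consecutive values, so the labels $g(u)$, $u\in N(v)$, must be nearly equidistributed on $\{0,\dots,s\}$ for the spread condition to survive on that window and its subwindows. I would split on the size of $\Delta$. If $\Delta\le\lambda s$ for a suitable constant $\lambda<2$, no single vertex dominates: order the vertices by non-increasing degree and assign their labels greedily, each time choosing the value that least increases a congestion potential measuring how close each window's partial sum of $h$ is to its ceiling, and verify by a weighted double-counting argument — using $\sum_v d(v)=2m\le 6s+2$ and that $m$ is large — that the potential never reaches the threshold. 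If instead $\Delta$ is close to $2s$, then $G$ is within $O(s)$ edges of the star $K_{1,\Delta}$: take $v^*$ of degree $\Delta$, set $g(v^*)=0$ and distribute $g$ on $N(v^*)$ almost uniformly on $\{0,\dots,s\}$, which uses up essentially all the slack of the windows contained in $[0,s]$ and so forces the at most $s+O(1)$ non-star edges to realize sums in $(s,2s]$; since those edges span a sparse, low-degree subgraph of $V\setminus\{v^*\}$, choose — via a matching or a greedy covering argument — which of their endpoints receive large labels, and fit this choice into the near-uniform profile demanded on $N(v^*)$, exploiting that each neighbor still admits many labels and that only $O(s)$ neighbors lie on non-star edges.

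The main obstacle is exactly this coordination in the regime $\Delta$ close to $2s$: the near-uniformity of $g$ on $N(v^*)$ and the demand that $\Theta(s)$ extra edges reach $(s,2s]$ compete for the \emph{same} vertex labels whenever an extra edge meets $N(v^*)$ — which, there being $\Theta(s)$ of them, is unavoidable — while the mass of $h$ must simultaneously be placed correctly at $0$ and at $2s$ (for instance an extra edge with both endpoints labeled $s$ is required when $m\equiv1\pmod3$). One must also treat the transitional range $\Delta\approx\lambda s$ cleanly and deal with a possible second vertex of degree of order $s$; fortunately two vertices of degree exceeding $\tfrac{m+1}{2}$ cannot coexist — their edge sets are almost disjoint yet would jointly contain more than $m$ edges — so there is at most one ``near-star'' center to worry about. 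The hypothesis $m\ge 7\times10^{10}$ is what absorbs the error terms in all these counting arguments, together with the range $\Delta>m/111000$ that Theorem~\ref{brandt} leaves open, where this approach would otherwise be too lossy.
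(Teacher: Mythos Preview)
Your ``spread condition'' is exactly the paper's notion of a \emph{well guarded} vertex weighting, so the reduction to finding a good $g$ is correct. But what follows is a plan, not a proof, and both branches of your case split contain genuine gaps.

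For $\Delta\le\lambda s$ you propose a greedy assignment minimizing a congestion potential and appeal to ``a weighted double-counting argument'' using $\sum d(v)=2m$. No such argument is given, and it is unclear one exists: the constraint is not a single potential but $\Theta(s^2)$ window inequalities, many of them nearly tight simultaneously, and a vertex of degree $\eps m$ affects $\Theta(s)$ windows at once. The paper does \emph{not} proceed greedily here. It sets a threshold $\eps m$, puts the high-degree vertices $B$ into two classes $B_0,B_S$ receiving labels $0$ and $s$, labels all remaining vertices \emph{randomly} from $\{0,\tfrac{s}{20},\dots,s\}$ with a carefully tuned non-uniform distribution, computes the expected window counts explicitly (this is the source of the numerical tables in the appendices), and uses Azuma's inequality to show all $40$ window inequalities hold with positive probability. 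The bound $m\ge 7\times 10^{10}$ comes precisely from summing the Azuma failure probabilities.

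For $\Delta$ close to $2s$ you yourself identify the obstacle --- coordinating the near-uniform profile on $N(v^*)$ with the placement of the $\Theta(s)$ extra edges --- and do not resolve it. The paper's Case~1 handles this with a concrete structural argument (Claims~\ref{sub}--\ref{c2} finding a large sparse set $X\subseteq N(v^*)$, then Lemma~\ref{easy} or an explicit weighting). Moreover, your dichotomy misses the intermediate regime: there can be several vertices of degree $\Theta(m)$ (anything above $\eps m$), none exceeding $m/2$. The paper needs two further cases (Cases~2 and~3, with $|B_0|=2$ and $|B_0|\ge 3$) and a separate key lemma (Lemma~\ref{easy}) to deal with these; your remark that at most one vertex has degree $>(m+1)/2$ is true but does not cover this.

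Finally, the reduction to connected $G$ is not as routine as you suggest: reserving a block of target sums for each component requires, for each component, a weighting hitting a \emph{prescribed} interval of sums, which is strictly stronger than the theorem applied to that component with its own smaller $s$. The paper sidesteps this by padding edges so that $3\mid m-1$ and never assuming connectivity.
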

The proof of this Theorem will be presented in Section~\ref{mainproof}. With a similar proof, presented in Section~\ref{maxdegproof}, we can improve on Theorem~\ref{brandt} as follows.
\begin{theorem}\label{maxdeg}
 Let $G$ be a graph with $m:=|E(G)|$, and $\Delta(G)\le \frac{m}{4350}$. 
Then 
$
 tes(G)=  \left\lceil\frac{m+2}{3}\right\rceil.
$
\end{theorem}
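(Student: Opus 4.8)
The plan is to establish the matching upper bound $tes(G)\le\left\lceil\frac{m+2}{3}\right\rceil$; the reverse inequality is the lower bound recalled in the introduction. We may assume $m\ge 1$, whence $\Delta\ge 1$ and the hypothesis forces $m\ge 4350$; then $\left\lceil\frac{\Delta+1}{2}\right\rceil\le \frac{m}{8700}+1<\frac{m+2}{3}$ and also $\Delta<2s$, so the maximum in the statement equals $\left\lceil\frac{m+2}{3}\right\rceil=s+1$ with $s:=\left\lceil\frac{m-1}{3}\right\rceil$. Deleting isolated vertices changes neither side, so assume $\delta(G)\ge 1$. Following the convention of the abstract, it suffices to produce a total edge irregular weighting $\hat w:E\cup V\to\{0,1,\ldots,s\}$, i.e. one for which the $m$ sums $\sigma(uv):=\hat w(uv)+\hat w(u)+\hat w(v)$ are pairwise distinct; since $\sigma$ ranges over $\{0,\ldots,3s\}$ and $3s+1\ge m$ (with equality exactly when $m\equiv 1\bmod 3$), the sum-set will have to be an $m$-element (in the tight case, initial) segment of $\{0,\ldots,3s\}$.

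\emph{Reformulation.} Carry out the construction in two stages. First fix the vertex part $\hat w|_V$; this determines, for every edge $e=uv$, the quantity $\ell_e:=\hat w(u)+\hat w(v)\in\{0,\ldots,2s\}$, and then choosing $\hat w(uv)\in\{0,\ldots,s\}$ amounts to choosing $\sigma(e)$ anywhere in the length-$(s+1)$ interval $[\ell_e,\ell_e+s]$. Thus, once $\hat w|_V$ is fixed, a valid completion exists iff the intervals $\{[\ell_e,\ell_e+s]:e\in E\}$ admit a system of distinct representatives; by Hall's theorem for interval bipartite graphs this holds precisely when the multiset $(\ell_e)_{e\in E}$ is sufficiently spread, namely when for every window $W\subseteq\{0,\ldots,2s\}$ of consecutive values all of which occur, one has $|\{e:\ell_e\in W\}|\le|W|+s$ (with the analogous bound applied to each maximal cluster of occurring $\ell$-values, to account for gaps). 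So the problem reduces to finding a vertex labeling $\hat w|_V:V\to\{0,\ldots,s\}$ for which the label-sums $\ell_e$ are this well distributed.

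\emph{Building the vertex labeling.} This is the heart of the argument, and it is here that the proof parallels that of Theorem~\ref{main}. I would order the vertices $v_1,\ldots,v_n$ (for instance by non-increasing degree, or along a breadth-first search) and assign labels one at a time, finalizing an edge when its second endpoint is labeled. The invariant to maintain is that after processing $v_1,\ldots,v_i$ the committed values $\ell_e$ of the finalized edges occupy a prescribed near-interval set of size $m_i:=|E(G[\{v_1,\ldots,v_i\}])|$, each value used only boundedly often, so that Hall's condition persists. Concretely, when $v_i$ is added one must pick its label $a:=\hat w(v_i)\in\{0,\ldots,s\}$ so that, for its back-neighbours $u$ (the $d^-(v_i)\le\Delta$ neighbours among $v_1,\ldots,v_{i-1}$) with labels $b_u$, the shifted values $a+b_u$ can be injectively fitted, up to an additive slack $s$, into the $d^-(v_i)$ newly vacated target sums. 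Since $d^-(v_i)\le\Delta\le m/4350$, the spread $\max_u b_u-\min_u b_u\le s$, and the block of new targets has width $d^-(v_i)-1<s$, a greedy matching of the sorted labels $b_u$ against the sorted targets succeeds as soon as an admissible $a$ exists, and checking that the interval of admissible $a$ is nonempty at every step is exactly the arithmetic that the constant $4350$ is chosen to make go through. In the complementary regime where $G$ is sparse (so $n$ is large relative to $m/\Delta$), it may be cleaner to peel off a spanning forest, label its edges along a rooting as above, and then insert the remaining few-per-vertex non-forest edges one at a time into still-unused target sums, again exploiting the abundance of slack from $\Delta\le m/4350$.

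\emph{The main obstacle.} The difficulty is not producing a spread-out vertex labeling in isolation but keeping it compatible with the interval constraints all the way to the last vertex, when essentially all of the $3s+1$ possible sums are already committed and only $d^-(v_n)$ slots survive — these last few sums must land exactly in the remaining target values, each inside its own length-$(s+1)$ interval. Guaranteeing this forces a careful choice of the vertex ordering (so that the vertices processed last still have back-neighbours with well-distributed labels) together with careful bookkeeping of which target sums are held in reserve; this is where the constant $4350$ is actually spent, and where one must reproduce, in the more forgiving low-degree setting, the delicate endgame of the proof of Theorem~\ref{main}.
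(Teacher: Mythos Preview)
Your reformulation is correct and is exactly the paper's notion of a \emph{well guarded} vertex weighting: once $\hat w|_V$ is fixed, the edge labels can be completed iff for every $i$ the count $|\{e:\ell_e\le i\}|$ lies between $i+1$ and $i+s+1$ (in the tight case $m=3s+1$), which is the Hall condition you wrote. So the reduction to a pure vertex-labeling problem is right.

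The rest of the proposal, however, is not a proof but an outline with a real gap. You propose a deterministic greedy scheme: order the vertices, label them one by one, and argue that at each step an admissible label $a$ exists. You assert that ``checking that the interval of admissible $a$ is nonempty at every step is exactly the arithmetic that the constant $4350$ is chosen to make go through'', but you never do that arithmetic; and in your final paragraph you explicitly concede that the endgame---where the last few edges must hit exactly the remaining target sums---is unresolved. No invariant is stated precisely, no inequality is checked, and it is not clear that a greedy scheme of this shape can be pushed through at all: when you label $v_i$ you simultaneously fix $\ell_e$ for up to $\Delta$ back-edges whose other endpoints carry labels you no longer control, and keeping the global distribution of $\ell_e$ within the required corridor for \emph{every} threshold $i$ is not obviously achievable by a local choice.

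The paper's proof is entirely different and avoids this difficulty. It is \emph{probabilistic}: one sets $\eps=2.3\times 10^{-4}>1/4350$ and, since $\Delta\le \eps m$ forces the ``big'' set $B$ to be empty, assigns each vertex an independent random weight from a fixed discrete distribution supported on $\{0,\tfrac{s}{20},\tfrac{2s}{20},\ldots,s\}$ (this is the $e_0=e_S=0$ specialisation of Case~4 of Theorem~\ref{main}). One computes the expectations of $X_i=|\{e:\ell_e\le is/20\}|$ exactly, verifies that each $\ex(X_i)$ sits well inside the target interval with slack $\delta_i m$, and then applies Azuma's inequality with Lipschitz constants $c_j=d(v_j)\le\eps m$ to conclude that with positive probability all forty inequalities hold simultaneously. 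The constant $4350$ enters only through the Azuma bound $\sum e^{-0.99\delta_i^2/(4\eps)}<1$. There is no greedy construction and no endgame; the ``delicate endgame'' you refer to belongs to Cases~1--3 of Theorem~\ref{main}, which handle vertices of very large degree and are irrelevant here.
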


For notation not defined here, we refer the reader to Diestel's book~\cite{D}. In particular, if $X$ and $Y$ are subsets of the vertex set of a graph $G$ and if $E'\subseteq E$ is a subset of its edges, we write $G[X]$ for the induced subgraph of $G$ on $X$, and we write short $E'(X)$ for the edge set $E'\cap E(G[X])$, and $E'(X,Y)$ for all edges in $E'$ from $X$ to $Y$. 

\section{Preliminary Results}

By Theorem~\ref{brandt}, we only have to consider the case $\left\lceil\frac{\D+1}{2}\right\rceil< \frac{m+2}{3}$.
Without loss of generality we may assume in the following that $m-1$ is 
divisible by $3$, as otherwise we may just add one or two edges (and possibly vertices) and consider the 
larger graph, only increasing the difficulty of the assignment. 

Let $s:=\frac{m-1}{3}$ and $w:V\to \{ 0,1,\ldots,s\}$ be a vertex weighting. For $e=xy$ we set $w(e):=w(x)+w(y)$. 
We call $w$ {\em well guarded} if for all $0\le i\le 2s$, 
\[
i+1\le |\{ e\in E\mid w(e)\le i\} |\le i+s+1.
\] 
The following fact is immediate as well 
guarded weightings are easily extended to total edge irregular weightings and vice versa:
\begin{fact}
 A graph $G$ has a total edge irregular weighting $\hat{w}:E\cup V\to \{ 1,2,\ldots,s+1\}$ if and only if
$G$ has a well guarded weighting $w:V\to \{ 0,1,\ldots,s\}$.
\end{fact}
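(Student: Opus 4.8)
The plan is to set up an explicit, reversible dictionary between the two kinds of weighting via the vertex shift $\hat w(v)=w(v)+1$, and to analyse the resulting constraints on the edge weights. The starting observation is that if $\hat w:E\cup V\to\{1,\ldots,s+1\}$ is total edge irregular, then every weight $\hat w(uv)+\hat w(u)+\hat w(v)$ lies in $\{3,\ldots,3s+3\}$ (minimum $1+1+1$, maximum $(s+1)+(s+1)+(s+1)$), and this set has exactly $3s+1=m$ elements; since the $m$ weights are pairwise distinct, $e\mapsto \hat w(e)+\hat w(u)+\hat w(v)=:f(e)$ is a bijection from $E$ onto $\{3,\ldots,3s+3\}$. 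Writing $w(v):=\hat w(v)-1\in\{0,\ldots,s\}$ and $w(e):=w(u)+w(v)$ for $e=uv$, one has $f(e)=\hat w(e)+w(e)+2$, so, as $\hat w(e)\in\{1,\ldots,s+1\}$, the bijective weight $f$ satisfies $f(e)-s-3\le w(e)\le f(e)-3$ for every edge.

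For the forward implication I would fix $0\le i\le 2s$ and read off both bounds of well-guardedness from this two-sided inequality. If $f(e)\le i+3$ then $w(e)\le f(e)-3\le i$, and there are exactly $i+1$ edges with $f(e)\le i+3$ (the values $3,\ldots,i+3$, all at most $3s+3$ since $i\le 2s$), giving $|\{e: w(e)\le i\}|\ge i+1$. Dually, if $f(e)\ge i+s+4$ then $w(e)\ge f(e)-s-3\ge i+1$; there are $\max\{0,2s-i\}$ such edges, so $|\{e: w(e)\ge i+1\}|\ge 2s-i$, whence $|\{e: w(e)\le i\}|=m-|\{e: w(e)\ge i+1\}|\le (3s+1)-(2s-i)=i+s+1$ (the case $i=2s$ being immediate since all $m=3s+1$ edges satisfy $w(e)\le 2s$). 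Thus $w$ is well guarded.

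For the converse, start from a well-guarded $w$, put $\hat w(v):=w(v)+1\in\{1,\ldots,s+1\}$, and try to choose $\hat w(e)\in\{1,\ldots,s+1\}$ for every edge so that the full weights $f(e)=\hat w(e)+w(e)+2$ are pairwise distinct. The admissible set for $f(e)$ is the interval $I_e:=\{w(e)+3,\ldots,w(e)+s+3\}\subseteq\{3,\ldots,3s+3\}$, which depends only on $w(e)$, and we want a system of distinct representatives for the family $(I_e)_{e\in E}$. For a family of integer intervals the Hall condition need only be tested on windows of consecutive integers: it suffices that for every $\{a,\ldots,b\}$ the number of edges $e$ with $I_e\subseteq\{a,\ldots,b\}$ is at most $b-a+1$ (equivalently, run the greedy algorithm, taking edges in increasing order of $w(e)$ and assigning the least unused admissible value). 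Since $I_e\subseteq\{a,\ldots,b\}$ iff $a-3\le w(e)\le b-s-3$, setting $i=a-3$ and $j=b-s-3$ this says $|\{e: i\le w(e)\le j\}|\le j-i+s+1$ for all $0\le i\le j\le 2s$, which follows from well-guardedness because $|\{e: i\le w(e)\le j\}|=|\{e: w(e)\le j\}|-|\{e: w(e)\le i-1\}|\le (j+s+1)-i=j-i+s+1$ (using the upper bound for $w(e)\le j$ and, when $i\ge 1$, the lower bound $|\{e: w(e)\le i-1\}|\ge i$; when $i=0$ the subtracted term is $0$). The edge values obtained this way, together with $\hat w$ on vertices, form a total edge irregular weighting with range $\{1,\ldots,s+1\}$.

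The only step that is not bookkeeping is the converse's appeal to Hall/SDR theory for interval families: one must either invoke the classical fact that a family of integer intervals has a system of distinct representatives as soon as no window $\{a,\ldots,b\}$ contains more than $b-a+1$ of them, or give the short greedy proof, and then check that the translation of this window condition is exactly the upper bound in the definition of well guarded (tracking the endpoint shifts and the boundary case $i=0$). Everything else — the shift $\hat w(v)\leftrightarrow w(v)+1$, the remark that the $m$ distinct full weights must exhaust the $m$-element set $\{3,\ldots,3s+3\}$, and the arithmetic of the two inequalities above — is routine.
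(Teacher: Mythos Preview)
Your argument is correct. The paper itself gives no proof, calling the fact ``immediate,'' so there is nothing to compare against at the level of detail you provide; what you have written is exactly the natural justification. One minor remark: the appeal to Hall's theorem for interval families, while valid, is slightly more elaborate than needed. Once you sort the edges as $e_1,\ldots,e_m$ with $w(e_1)\le\cdots\le w(e_m)$, you may simply declare $f(e_k)=k+2$ and set $\hat w(e_k)=k-w(e_k)$; the requirement $1\le \hat w(e_k)\le s+1$ then reads $k-s-1\le w(e_k)\le k-1$, and the two inequalities in the definition of ``well guarded'' translate into exactly these two bounds (take the largest, respectively smallest, $k$ with $w(e_k)=i$). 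This is presumably the one-line argument the paper has in mind, and your SDR proof is just a mild repackaging of it.
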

Thus, we can restrict ourselves to vertex weightings in our quest for total edge irregular 
weightings. We will call an edge set $E'\subseteq E$ a {\em guarding set}, if for all $0\le i\le 2s$,
\[
i+1\le |\{ e\in E'\mid w(e)\le i\} |
\le i+s+1-|E\setminus E'|.
\]
Clearly,  $w$ is well guarded if and only if a guarding set exists.
The next lemma describes a set up where we can find a guarding set deterministically. In the proof of Theorem~\ref{main}, we will encounter this set up several times.
\begin{lemma}
 \label{easy}
Let $V(G)=A_1\cupdot A_2\cupdot C$ be a partition of the vertices of a graph $G$ with $3s+1=m=|E(G)|$, let $E'=E(G)\setminus E(C)$, and let $\Delta_i:=\max_{v\in C}{|E(A_i,v)|}$ for $i\in \{ 1,2\}$. 
If 
\begin{enumerate}
\item $|E(A_1)|\le |E'|-2s-\Delta_1$,\label{a}
\item $|E(A_2)|\le |E'|-2s-\Delta_2$,\label{b}
\item $|E(A_1,V)|\le |E'|-s+1-\Delta_2$,\label{c}
\item $|E(A_2,V)|\le |E'|-s+1-\Delta_1$,\label{d}
\item $\Delta_2+\tfrac{s}{|E(A_1,C)|-\Delta_1}\Delta_1\le s-|E\setminus E'|$.\label{e}
\end{enumerate}
then there exists a weighting such that $E'$ is a guarding set.
\end{lemma}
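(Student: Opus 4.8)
The plan is to build the vertex weighting $w$ greedily and then exhibit a guarding set inside $E'$. The ordering condition for a well-guarded weighting says that the edge-weight function $w(e)$, as we sweep the threshold $i$ from $0$ to $2s$, picks up enough edges to stay above the lower bound $i+1$ but not so many that it overshoots $i+s+1-|E\setminus E'|$. So the idea is to control, for each threshold $i$, the quantity $N(i):=|\{e\in E'\mid w(e)\le i\}|$.

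First I would weight the vertices of $A_1$ and $A_2$ so that the edges inside $A_1$ occupy the lowest weight levels, the edges inside $A_2$ occupy weight levels near the top (weights close to $2s$), and the edges from $A_1$ to $C$ and from $A_2$ to $C$ are distributed to fill in the middle. Concretely: give vertices in $A_1$ small weights $0,1,\dots$ in a balanced way so that each new weight value added to the pool brings in roughly one new edge of $E(A_1)$ at a time; hypothesis (\ref{a}) is exactly what guarantees there are few enough such edges that this can be done while keeping $N(i)\le i+s+1-|E\setminus E'|$. Symmetrically, weight $A_2$ with large weights so that $E(A_2)$ sits at the top, using (\ref{b}). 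The vertices of $C$ get a weight chosen in the middle range, so that the cross edges $E(A_1,C)$ and $E(A_2,C)$ — which dominate the middle of the spectrum — are spread out; conditions (\ref{c}) and (\ref{d}) bound how many edges touch $A_1$ (resp. $A_2$) in total, which controls how much of the spectrum is forced to the bottom (resp. top) and hence prevents the lower bound $N(i)\ge i+1$ from failing in the middle region.

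The delicate point is the interaction between the $A_1$--$C$ edges and the $A_2$--$C$ edges in the middle range, and this is where hypothesis (\ref{e}) enters. When I fix a vertex $v\in C$ at weight $w(v)$, its edges to $A_1$ land at weights $w(v)+w(A_1\text{-endpoint})$ and its edges to $A_2$ at $w(v)+w(A_2\text{-endpoint})$; the worst case is a vertex of $C$ that has many neighbours in both $A_1$ and $A_2$, since then a single choice of $w(v)$ dumps up to $\Delta_1$ edges at one cluster of weights and up to $\Delta_2$ at another. The factor $\frac{s}{|E(A_1,C)|-\Delta_1}$ is the ``stretch'': after removing the at most $\Delta_1$ edges at the extremal vertex, the remaining $A_1$--$C$ edges are spread over a window of length $s$, so each unit of weight shift for $v$ moves the $A_2$--$C$ contribution by about $\frac{s}{|E(A_1,C)|-\Delta_1}$ relative to the $A_1$--$C$ contribution. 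Requiring $\Delta_2+\frac{s}{|E(A_1,C)|-\Delta_1}\Delta_1\le s-|E\setminus E'|$ ensures that even this worst local pile-up does not push $N(i)$ above the upper guarding bound $i+s+1-|E\setminus E'|$ at any threshold. I would make this precise by processing the vertices of $C$ one at a time, maintaining the invariant that $N(i)$ stays in the allowed band for all $i$, and checking that each condition (\ref{a})--(\ref{e}) is used exactly to preserve one side of one inequality; the main obstacle, and the part that needs the most care, is the bookkeeping for this middle region, where both the upper and lower guarding inequalities are simultaneously tight.

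Once such a $w$ is constructed, $E'$ is a guarding set by definition, so by the remark following the definition of guarding set, $w$ is well guarded (the edges of $E(C)=E\setminus E'$ can be slotted in at weights where there is slack, using the $-|E\setminus E'|$ safety margin built into the guarding-set inequality), and then the Fact gives the desired total edge irregular weighting. I expect that with the numerical slack available the verification of (\ref{a})--(\ref{e})$\Rightarrow$guarding set is a finite case analysis on the position of the threshold $i$ (bottom block dominated by $E(A_1)$, middle block dominated by the cross edges, top block dominated by $E(A_2)$), with the transitions between blocks being the places one must check both inequalities by hand.
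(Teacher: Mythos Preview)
Your plan diverges from the paper at the very first step, and the divergence is not cosmetic. The paper sets \emph{every} vertex of $A_1$ to weight $0$ and every vertex of $A_2$ to weight $s$; it does not spread weights $0,1,2,\dots$ over $A_1$. The payoff is that the weight of any cross edge in $E(A_1,C)\cup E(A_2,C)$ is determined solely by the weight of its $C$-endpoint: an edge $x_iy$ with $y\in A_1$ has weight $w(x_i)$, and with $y\in A_2$ has weight $s+w(x_i)$. This is what makes the counting tractable. With your scheme of giving $A_1$ a range of small weights, each edge in $E(A_1,C)$ has a weight depending on both endpoints, so fixing $w(v)$ for $v\in C$ no longer places the $A_1$--$v$ edges at a single level; you lose the clean monotonicity that drives the whole argument. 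Your reading of condition~(\ref{a}) as room to ``spread $E(A_1)$ one edge per level'' is also off: (\ref{a}) is exactly the bound needed so that \emph{all} of $E(A_1)$ can sit at level $0$ together with up to $\Delta_1$ extra $A_1$--$C$ edges and still satisfy the upper guarding inequality at $i=0$.

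Once the constant weights on $A_1,A_2$ are in place, the paper's proof has two phases you do not anticipate. For $0\le i<s$ the weight $w(x_i)$ is given by an explicit formula, $w(x_i)=\lceil s\,|E(A_1,\{x_1,\dots,x_{i-1}\})|/|E(A_1,C')|\rceil$, chosen so that the $A_1$--$C$ edges arrive at a rate of roughly $|E(A_1,C')|/s$ per level; conditions (\ref{a}),(\ref{c}),(\ref{d}) then give both guarding inequalities on this range \emph{regardless of the ordering of $C$}. For $s\le i\le 2s$ the ordering of $C$ is chosen greedily (last vertex first, then $x_1,x_2,\dots$) so that the running count of $A_2$--$C$ edges stays in a window of width $s-|E\setminus E'|-\Delta_2$; condition~(\ref{e}) is precisely what guarantees that at each step some vertex can be chosen without violating either side of this window, and (\ref{b}) handles the endpoint. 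Your proposal identifies neither the explicit formula nor the two-phase structure, and your heuristic for~(\ref{e}) (a ``stretch'' factor controlling pile-up) does not match its actual role as the feasibility condition for the greedy ordering.
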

\begin{proof}
Let $C=\{ x_1,x_2,\ldots x_{|C|}\}$, where the exact order will be determined later. Let $C'=C- x_{|C|}$.
Let 
\[
 w(v)=\begin{cases}
       0, &\mbox{ for }v\in A_1,\\
       s, &\mbox{ for }v\in A_2,\\
\left\lceil \tfrac{s\cdot |E(A_1,\{ x_1,\ldots ,x_{i-1}\})|}{|E(A_1,C')|}\right\rceil,
&\mbox{ for }v=x_i\in C.
      \end{cases}
\]
Then for $0\le i< s$, 
we have
\begin{multline*}
 |E(A_1)|+\left\lfloor i\cdot\tfrac{ |E(A_1,C')|}{s}\right\rfloor +1\\
\le |\{ e\in E'\mid w(e)\le i\} | \\
\le |E(A_1)|+i\cdot \tfrac{|E(A_1,C')|}{s}+\Delta_1,
\end{multline*}
and therefore
\[
 i+1\le_{\eqref{d}} |\{ e\in E'\mid w(e)\le i\} | \le_{\eqref{a},\eqref{c}} i+1+s-|E\setminus E'|,
\]
regardless of the order of the vertices in $C$.

For $s\le i\le 2s$, we can now find a suitable ordering of $C$ greedily to show the lemma.
Pick $x_{|C|}$ first, so that $|E(A_2,x_{|C|})|$ is minimized under the condition that
\[
 |E(A_2)|+|E(A_2,x_{|C|})|\ge \Delta_2.
\]
Now choose the other $x_j$, starting with an arbitrary $x_1$, such that for every $j$,
\begin{multline}\label{ie1}
 s+w(x_j)
\le |E(A_1,V)|+|E(A_2,\{ x_1,\ldots,x_{j-1}\})|\\
\le 2s+1-|E\setminus E'|+w(x_j)-\Delta_2.
\end{multline}
This is always possible, as this inequality is true for $x_1$ (by~\eqref{a} and~\eqref{c}) and $x_{|C|}$ (by~\eqref{b}), and at no point in the process there can be remaining $x,x'\in C$ such that setting $x_j=x$ violates the lower inequality, and setting $x_j=x'$ violates the upper inequality by~\eqref{e}.
As
\[
|\{ e\in E'\mid w(e)\le i\} |=|E(A_1,V)|+|E(A_2,\{ x_1,\ldots,x_j\})|,
\]
for $j\le |C|$ maximized such that $w(x_j)\le i-s$, this shows that $E'$ is a guarding set.
\end{proof}

\section{Proof of Theorem~\ref{main}}\label{mainproof}


Let $\eps=2.7\times 10^{-5}$, 
and define the set of large degree vertices
\[
 B:=\{ v\in V\mid d(v)>\eps m\}.
\]
Then $m\ge |B|\eps m-\frac{|B|^2}{2}$, and therefore $|E(B)|<\frac{|B|^2}{2}<0.01m$.

Let $V'=V\setminus B$ and $m'=|E\setminus E(B)|>0.99m$.
Further, we partition $B$ into $B_0$ and $B_S$ as follows:
Order the vertices in $B$ by degree from large to small, and assign them in order to the set with fewer edges to $V'$. 
Let $e_0:= \frac{1}{m'}|E(B_0,V')|$ and
$e_S:= \frac{1}{m'}|E(B_S,V')|$.
Observe the following fact.
\begin{fact}\label{dist}
 If 
$v\in B_0$, then $e_0-e_S\le \frac{1}{m'}d(v).$ 
\end{fact}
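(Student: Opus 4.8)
The plan is to follow the greedy assignment one vertex at a time and, for a given $v\in B_0$, locate the step at which the gap $|E(B_0,V')|-|E(B_S,V')|$ is last increased. First I would fix notation: write $B=\{v_1,\dots,v_{|B|}\}$ in the order the vertices are processed, so that $d(v_1)\ge\cdots\ge d(v_{|B|})$, and let $B_0^{(k)},B_S^{(k)}$ denote the two partial classes right after $v_k$ has been placed. An edge $v_kw$ with $w\in V'$ is counted by $|E(B_0^{(k)},V')|$ precisely when $v_k$ was put into $B_0$; hence the $k$-th step increases exactly one of $|E(B_0^{(\cdot)},V')|$, $|E(B_S^{(\cdot)},V')|$ by $|E(\{v_k\},V')|\le d(v_k)$ and leaves the other fixed. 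In particular $|E(B_S^{(k)},V')|$ is nondecreasing in $k$.

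Next, fix $v\in B_0$ (if $B_0=\emptyset$ the claim is vacuous) and let $v_j$ be the \emph{last} vertex assigned to $B_0$ by the process. Every vertex of $B_0$, in particular $v$, is processed no later than $v_j$, so $d(v)\ge d(v_j)$ because the $v_i$ are sorted by degree. It therefore suffices to show $|E(B_0,V')|-|E(B_S,V')|\le d(v_j)$, since dividing by $m'$ then yields $e_0-e_S\le \tfrac{1}{m'}d(v_j)\le \tfrac{1}{m'}d(v)$.

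For the displayed bound, I would argue as follows. At step $j$ the vertex $v_j$ is put into the class with the fewer edges to $V'$, so $|E(B_0^{(j-1)},V')|\le|E(B_S^{(j-1)},V')|$ (this also covers a tie), whence
\begin{multline*}
|E(B_0^{(j)},V')|-|E(B_S^{(j)},V')|\\
=|E(B_0^{(j-1)},V')|+|E(\{v_j\},V')|-|E(B_S^{(j-1)},V')|\\
\le |E(\{v_j\},V')|\le d(v_j).
\end{multline*}
By the choice of $v_j$, no vertex processed after step $j$ enters $B_0$, so $|E(B_0^{(k)},V')|$ is constant for $k\ge j$ while $|E(B_S^{(k)},V')|$ only grows; hence the final gap $|E(B_0,V')|-|E(B_S,V')|$ is at most the gap after step $j$, which we just bounded by $d(v_j)$. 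There is essentially no obstacle here — the only points needing a word of care are the tie-breaking convention and the degenerate case $B_0=\emptyset$; the genuine difficulty of this section lies later, in producing partitions that satisfy the hypotheses of Lemma~\ref{easy}.
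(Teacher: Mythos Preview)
Your argument is correct and is precisely the routine verification the paper has in mind; the paper itself gives no proof beyond ``Observe the following fact,'' and your tracking of the greedy process up to the last vertex placed in $B_0$ is the natural way to make that observation explicit.
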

We will divide the proof into four cases. For the first three, we assume that $e_0\ge e_S$.

\begin{case}$e_0\ge 0.52$ and $|B_0|=1$.
\label{case1}\end{case}
Let $v_1$ be the vertex in $B_0$, then $d(v_1)=\Delta :=\Delta(G)>0.51m$, and let $H=G[V\setminus v_1]$. 
Note that in this case, we may assume that $V=\{v_1\}\cup N(v_1)$ (so $|V(H)|=\Delta$ and $|E(H)|=m-\Delta $). Otherwise, as $H$ does not have enough edges to be connected, a vertex $u\in V(H)\setminus N(v_1)$ has distance at least $3$ in $G$ to a vertex $v$ in another component of $H$. We can identify these two vertices and proceed with the smaller graph $G'$, where $|E(G')|=|E(G)|$ and $tes(G')\ge tes(G)$.

\begin{claim}\label{sub}
There exists $X'\subseteq V'$ with 
\[ |X'|= \lfloor\tfrac{2}{3}(2|V(H)|-|E(H)|)\rfloor \mbox{ and }|E(X')|\le \tfrac{1}{2}|X'|.\]
\end{claim}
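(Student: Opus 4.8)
We are in Case 1, where there is a single vertex $v_1$ of degree $\Delta > 0.51m$, and $V = \{v_1\} \cup N(v_1)$, so $V' = V\setminus B = N(v_1)$ has $|V(H)| = \Delta$ vertices (where $H = G[V\setminus v_1]$) and $|E(H)| = m - \Delta$ edges. We want a subset $X' \subseteq V'$ of prescribed size $k := \lfloor \frac{2}{3}(2\Delta - (m-\Delta))\rfloor = \lfloor \frac{2}{3}(3\Delta - m)\rfloor$ with at most $\frac{1}{2}|X'|$ edges inside.

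**Plan of proof.** First I would check that the target size $k$ is well-defined, i.e. $0 \le k \le |V(H)| = \Delta$: since $\Delta > 0.51m$, we have $3\Delta - m > 0.53m > 0$, and $\frac{2}{3}(3\Delta - m) = 2\Delta - \frac{2}{3}m < 2\Delta - \frac{2}{3}\cdot \frac{\Delta}{0.52} < \Delta$ for $\Delta$ in the relevant range (here one uses $\Delta \le 2s < \frac{2m}{3}$, which holds by the standing assumption $\lceil\frac{\Delta+1}{2}\rceil < \frac{m+2}{3}$), so indeed $k \le \Delta$. Second, and this is the heart of the argument, I would show that a uniformly random $k$-subset $X'$ of $V(H)$ has expected internal edge count $\mathbb{E}|E(X')| = |E(H)|\cdot \frac{k(k-1)}{\Delta(\Delta-1)}$, since each edge of $H$ survives with probability $\binom{\Delta-2}{k-2}/\binom{\Delta}{k} = \frac{k(k-1)}{\Delta(\Delta-1)}$. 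It therefore suffices to verify the deterministic inequality
\[
(m-\Delta)\cdot\frac{k(k-1)}{\Delta(\Delta-1)} \le \frac{k}{2},
\]
equivalently $2(m-\Delta)(k-1) \le \Delta(\Delta - 1)$; then some $k$-subset attains at most the average, proving the claim. Substituting $k \le \frac{2}{3}(3\Delta - m) = 2\Delta - \frac{2}{3}m$ and using $\Delta > 0.51m$, the left side is at most roughly $2(m-\Delta)(2\Delta - \frac{2}{3}m)$, and one checks this is at most $\Delta(\Delta-1)$ throughout the range $0.51m < \Delta < \frac{2}{3}m$; at the endpoints $\Delta = 0.51m$ and $\Delta = \frac{2}{3}m$ the left side degenerates favorably, and in between it is a concave-down quadratic in $\Delta$ that stays below the quadratic $\Delta^2$ — this is a routine one-variable estimate, using $m \ge 7\times 10^{10}$ to absorb the lower-order $\Theta(m)$ terms.

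**The main obstacle.** The only real subtlety is the quadratic inequality $2(m-\Delta)(k-1) \le \Delta(\Delta-1)$: it is tight-ish in character (both sides are $\Theta(m^2)$), so one must be somewhat careful with the constants $0.51$ and $0.52$ and with the floor in the definition of $k$. The cleanest way is to drop the $-1$'s (they only help, after a trivial $O(m)$ correction absorbed by largeness of $m$) and reduce to checking $2(m-\Delta)(2\Delta - \tfrac{2}{3}m) \le \Delta^2$ for $0.51m \le \Delta \le \tfrac{2}{3}m$; writing $\Delta = \alpha m$ this becomes $2(1-\alpha)(2\alpha - \tfrac23) \le \alpha^2$, i.e. $f(\alpha) := \alpha^2 - 2(1-\alpha)(2\alpha - \tfrac23) \ge 0$ on $[0.51, \tfrac23]$, which one verifies by noting $f(\tfrac23) = \tfrac49 > 0$, $f(0.51) > 0$, and $f$ is a quadratic with positive leading coefficient. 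I expect no genuine difficulty beyond this bookkeeping; the probabilistic (averaging) step itself is entirely standard.
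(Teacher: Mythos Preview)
Your averaging approach is natural, but the crucial inequality does not hold, so the argument collapses. Write $\Delta=\alpha m$ and drop lower-order terms as you suggest; the question becomes whether
\[
f(\alpha)\;:=\;\alpha^2-2(1-\alpha)\bigl(2\alpha-\tfrac23\bigr)\;\ge\;0
\qquad\text{for }\alpha\in[0.51,\tfrac23].
\]
Expanding gives $f(\alpha)=5\alpha^2-\tfrac{16}{3}\alpha+\tfrac{4}{3}=5\bigl(\alpha-\tfrac{2}{5}\bigr)\bigl(\alpha-\tfrac{2}{3}\bigr)$, so $f$ is \emph{negative} on the whole open interval $(\tfrac25,\tfrac23)$; in particular $f(\tfrac23)=0$ (not $\tfrac49$) and $f(0.51)\approx -0.086<0$. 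Concretely, at $\alpha=0.51$ the graph $H$ has average degree about $1.92$, and a uniformly random subset of the required size (about $0.69\,|V(H)|$) inherits expected average degree about $1.33>1$, so a random $k$-set typically has more than $k/2$ internal edges. Uniform averaging simply does not reach the target density here. (There is also a minor slip: $V'=V\setminus B$ need not equal $V(H)$, since $B_S$ may be nonempty; but this is an $O(1/\eps)$ correction and not the real issue.)

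The paper instead takes an extremal $X'$ of the prescribed size with $|E(X')|$ minimal and argues by a swap: if $|E(X')|>\tfrac12|X'|$, some $x\in X'$ has at least two neighbours in $X'$, so minimality forces every $y\in Y':=V'\setminus X'$ to have at least two neighbours in $X'$. Summing edges then yields
\[
|E(H)|\;\ge\;|E(X')|+|E(X',Y')|\;>\;\tfrac12|X'|+2|Y'|\;\ge\;2|V(H)|-\tfrac32|X'|\;\ge\;|E(H)|,
\]
a contradiction (with a small correction for the vertices in $B\setminus\{v_1\}$). The point is that minimality buys the uniform lower bound $|E(y,X')|\ge 2$ for \emph{every} $y\in Y'$, which is exactly what the averaging argument cannot supply and which makes the edge count tight.
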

Let $X'\subseteq V'$ with $|X'|=\lfloor\frac{2}{3}(2|V(H)|-|E(H)|)\rfloor$ and $|E(X')|$ minimal, and let $Y'=V'\setminus X'$. If $|E(X')|>\frac{1}{2}|X'|$, then $|E(y,X')|\ge 2$ for all $y\in Y'$, as otherwise we could reduce $|E(X')|$ by a vertex switch. Thus,
\begin{multline*}
|E(H)|\ge |E(X')|+|E(X',Y')|+|E(B,V)|-\Delta\\
>\frac{1}{2}|X'|+2|Y'|+\tfrac{1}{2}(|B|-1)\eps m\\
\ge 2|V(H)|-\frac{3}{2}|X'|\ge |E(H)|,
\end{multline*}
a contradiction showing the claim.
\begin{claim}\label{c2}
There exists  $X\subseteq V'$ with $|X|\ge s+1$ and $|E(X)|\le 2s-\Delta +1$.
\end{claim}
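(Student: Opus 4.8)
The plan is to obtain $X$ as a pruned subset of the set $X'$ furnished by Claim~\ref{sub}. In Case~\ref{case1} we have $m=3s+1$, $|V(H)|=\Delta$ and $|E(H)|=m-\Delta=3s+1-\Delta$, so Claim~\ref{sub} provides a set $X'\subseteq V'$ with
\[
|X'|=\left\lfloor\tfrac23\bigl(2\Delta-(3s+1-\Delta)\bigr)\right\rfloor=\lfloor 2\Delta-2s-\tfrac23\rfloor=2\Delta-2s-1
\]
and $|E(X')|\le\tfrac12|X'|$. Since we only need the case $\lceil\tfrac{\Delta+1}{2}\rceil<\tfrac{m+2}{3}$, we have $\Delta\le 2s$; and in Case~\ref{case1}, $\Delta>0.51m=1.53s+0.51$, which (as $m\ge 7\times10^{10}$) forces $\Delta>\tfrac32 s+1$. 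Hence $s+1<|X'|\le 2s-1$, and the number $k:=|X'|-(s+1)=2\Delta-3s-2$ of vertices we wish to delete satisfies $0<k<|X'|$.

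The key point is a parity observation: $|X'|=2\Delta-2s-1$ is odd, so the bound $|E(X')|\le\tfrac12|X'|$ sharpens, by integrality, to
\[
|E(X')|\le\tfrac{|X'|-1}{2}=\Delta-s-1 .
\]
I will combine this with the elementary fact that a graph on $N$ vertices with $E$ edges has, for every integer $k$ with $0\le k\le N$, an induced subgraph on $N-k$ vertices with at most $\max\{0,E-k\}$ edges (repeatedly delete an endpoint of an edge while one exists, each deletion killing at least one edge, and delete arbitrary vertices afterwards). Applying this to $G[X']$ with the value of $k$ above yields $X\subseteq X'\subseteq V'$ with $|X|=s+1$ and
\[
|E(X)|\le\max\{0,\,|E(X')|-k\}\le\max\{0,\,(\Delta-s-1)-(2\Delta-3s-2)\}=\max\{0,\,2s-\Delta+1\}=2s-\Delta+1,
\]
where the last equality uses $\Delta\le 2s$. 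This proves the claim.

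The only genuine difficulty is noticing the parity of $|X'|$: the careless estimate $|E(X')|\le\tfrac12|X'|\le\Delta-s-\tfrac12$, rounded the wrong way, would yield only $|E(X)|\le 2s-\Delta+2$, off by one. Everything else is routine bookkeeping --- unwinding the floor to the exact value $2\Delta-2s-1$, using the Case~\ref{case1} hypothesis together with $m$ large to get $k>0$, and the greedy deletion estimate. (Alternatively one could build $X$ directly inside $V'$ via a vertex-switching argument in the style of the proof of Claim~\ref{sub}, but routing it through $X'$ is shorter.)
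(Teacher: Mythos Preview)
Your proof is correct and follows essentially the same route as the paper: start from the set $X'$ of Claim~\ref{sub}, greedily delete $|X'|-(s+1)$ vertices each incident to an edge, and bound $|E(X)|$ using $|E(X')|\le\lfloor\tfrac12|X'|\rfloor$. The paper encodes the same parity observation as the step $|E(X')|-(|X'|-|X|)\le |X|-\lceil\tfrac12|X'|\rceil$ and then loosens to $2s-\Delta+\tfrac{11}{6}$ before rounding down; your version, computing $|X'|=2\Delta-2s-1$ explicitly, reaches $2s-\Delta+1$ directly and is a bit cleaner.
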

Use Claim~\ref{sub} to find a vertex set $X'\subseteq V'$.
 Successively delete vertices of maximum degree in $X'$ until we have a vertex set $X\subseteq X'$ with $|X|=s+1$. Then either $|E(X)|=0$ or 
\begin{multline*}
|E(X)|\le |E(X')|-(|X'|-|X|)\\
\le |X|-\lceil\tfrac{1}{2}|X'|\rceil
\le s+1-\tfrac{1}{3}(2|V(H)|-|E(H)|)+\tfrac{1}{2}\\
\le s+1-(\Delta-s-\tfrac{1}{3})+\tfrac{1}{2}
=2s-\Delta+\tfrac{11}{6},
\end{multline*}
showing the claim.

Now choose $X$ according to Claim~\ref{c2}, maximizing $|X|$,
and let $Y:=V(H)\setminus X$. 
%
We want to use Lemma~\ref{easy} to show that $E'=E\setminus E(X)$ is a guarding set: Let $A_1=\{ v_1\}$, $A_2=Y$ and $C=X$. Then $\Delta_1=1$ and $\Delta_2\le \eps m$. Conditions \eqref{a}, \eqref{d} and \eqref{e} are easily verified.

If \eqref{c} fails, say $|E(A_2)|+|E(A_2,C)|=s-1+\Delta_2-\gamma$, note that $X$ contains at least $|X|-s+\gamma$ vertices with no neighbors in $Y$. If \eqref{b} holds, we can use these vertices first in the proof of Lemma~\ref{easy} until \eqref{ie1} is satisfied, and see that $E'$ is a guarding set.

Finally, assume that \eqref{b} fails, i.e., 
\[|E(Y)|>|E'|-2s-\Delta_2\ge |E'|-2s-\eps m.\]
As every vertex in $Y\setminus B$ has at least one neighbor in $X$ by the maximality of $|X|$, 
we have 
\[
 |Y|\le |E'|-\Delta-|E(Y)|+|B|<2s-\Delta+\eps m+\tfrac{2}{\eps}\le 2s-\Delta+3\eps m.
\]

Let $Y_1:=\{ y\in Y: d(y)\ge 0.01s\}$. Then $0.48m\ge 0.01s|Y_1|-0.5|Y_1|^2$, so $|Y_1|\le 160$ and $|E(Y_1)|<0.02s$ as $s> 10^6$. Let $Y_2=Y\setminus Y_1$. Then more than $s-|E(X,Y)|-|E(Y_1)|>0.47s>2s-\Delta$ edges in $E(X\cup Y)$ are incident to $Y_2$, so we can greedily find some $Y_3\subseteq Y_2$ with 
\[
2s-\Delta\le |E(Y_3,X\cup Y)|< 2s-\Delta+0.01s.
\]
Let $a:=|E(Y_3)|$, and $b:=|E(Y_3,X\cup Y)|-a$. 

Let $X=\{ x_0,\ldots ,x_{|X|-1}\}$ with $|E(Y,x_i)|\le |E(Y,x_j)|$ for $i\le j$ and let 
$w(v_1)=0$, $w(v)=s-c:=\min\{ s-b,\lceil \Delta/2\rceil\}$ for $v\in Y_3$, $w(v)=s$ for $v\in Y\setminus Y_3$,
and 
$w(x_i)=\min\{ s,i\}$. 
We claim that $E'$ is a guarding set for $w$, settling Case~\ref{case1}.


For $0\le i\le s-1$, we have
\begin{multline*}
 i+1\le |\{ e\in E'\mid w(e)\le i\}|\le i+1+|Y_3|\\
<i+1+2s-\Delta<i-s+\Delta=i+s+1-|E\setminus E'|.
\end{multline*}
For $i=2s$, we have
\[
 2s+1< \{ e\in E'\mid w(e)\le 2s\}|=|E'|= 3s+1-|E\setminus E'|.
\]
For $s\le i\le 2s-1$, consider first the lower bound. We have
\[
 |\{ e\in E'\mid w(e)\le i\}|\ge \left.
\begin{cases}
 \Delta,&\mbox{ for }s\le i<2s-2c,\\
 \Delta+a,&\mbox{ for }2s-2c\le i<2s-c,\\
 \Delta+a+b,&\mbox{ for }2s-c\le i<2s,
\end{cases}
\right\} 
\ge s+i+1.
\]
For the sake of analysis of the upper bound, define another weighting $w'$, where $w'(v)=s$ for $v\in Y_3$ and $w'=w$ on all other vertices. Then for $s\le i<2s$,
\begin{multline*}
 \Delta\le |\{ e\in E'\mid w'(e)\le i\}|\le \max\{ \Delta, i+s-|X|-1\}\\
\le \max\{ \Delta, i+3s-2\Delta-2\}<
\max\{ \Delta, i-0.06s\}
\end{multline*}
since $|\{ e\in E'\mid w'(e)\le i\}|$ is maximized if $\max_{x\in X} |E(x,Y)|=1$.

Any edge $e\in E(X\cup Y)$ with $w'(e)<2s$ has weight $w(e)\ge w'(e)-c$. Therefore, we have
\begin{multline*}
 |\{ e\in E'\mid w(e)\le i\}|\\
 \le\left.
\begin{cases}
 |\{ e\in E'\mid w'(e)\le i+c\}|,&\mbox{ for }s\le i<2s-2c,\\
 |\{ e\in E'\mid w'(e)\le i+c\}|+a,&\mbox{ for }2s-2c\le i<2s-c,\\
 |\{ e\in E'\mid w'(e)\le 2s-1\}|+a+b,&\mbox{ for }2s-c\le i<2s,
\end{cases}
\right\} \\ 
\le\left.
\begin{cases}
 \max\{ \Delta, i-0.06s\},&\mbox{ for }s\le i<2s-2c,\\
 \max\{ \Delta, i-0.06s\}+a,&\mbox{ for }2s-2c\le i<2s-c,\\
 \max\{ \Delta, i-0.06s\}+a+b,&\mbox{ for }2s-c\le i<2s,
\end{cases}
\right\} \\
=\left.
\begin{cases}
 \Delta,&\mbox{ for }s\le i<2s-2c,\\
 \max\{ \Delta, i-0.06s\}+a,&\mbox{ for }2s-2c\le i<2s-c,\\
 \max\{ \Delta, i-0.06s\}+a+b,&\mbox{ for }2s-c\le i<2s,
\end{cases}
\right\} \\
\le \Delta+i-s=s+i+1-|E\setminus E'|.
\end{multline*}
To see the last inequality, note that it is enough to check it for $i\in \{ s,2s-2c,2s-c\}$. For $i=1$, the inequality is trivially true. For $i=2s-2c$, we have
\begin{multline*}
 \Delta +a<2.01s-b<\min\{ 2\Delta-1.01s-b,2\Delta-s\}\\
\le\min\{ \Delta+s-2b,2\Delta-s\}\le \Delta+i-s.
\end{multline*}
For $i=2s-c$, we have
\begin{multline*}
 \Delta +a+b<2.01s<\min\{ 2\Delta-1.01s,\lceil\tfrac{3}{2}\Delta\rceil\}\\
\le\min\{ \Delta+s-b,\lceil\tfrac{3}{2}\Delta\rceil\}= \Delta+i-s,
\end{multline*}
and 
\begin{multline*}
 i-0.06s +a+b=1.94s-c+a+b<
3.95s-\Delta-c
\le\Delta+s-c= \Delta+i-s.
\end{multline*}
This shows that $E'$ is a guarding set, establishing Case~\ref{case1}.

\begin{case}
$e_0\ge 0.52$ and $|B_0|=2$.
\end{case}


Let $B_0=\{ v_2,v_3\}$ and $v_1\in B_S$ with $d(v_1)=\Delta(G)$.
Let $H=G[V\setminus \{v_1,v_2,v_3\}]$. Let $d_i:=|E(v_i,V(H))|$ for $1\le i\le 3$, and we may assume that $d_1\ge d_2\ge d_3$. Note that $d_2+d_3>0.51m$.
Then $|H|\ge d_1$, and 
$|E(H)|\le m-d_1-d_2-d_3< 0.24m$.

\begin{claim}\label{c3}
 There is a set $X'\subseteq V'$ such that
 \[|E(X',\{ v_2,v_3\} )|\ge \min\{d_2+d_3-|B|, \tfrac{4}{3}(d_1+2d_2+2d_3-m)-2\},\] and
$|E(X')|\le 0.25|E(X',\{ v_2,v_3\} )|$.
\end{claim}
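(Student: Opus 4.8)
The plan is to prove this as the Case~2 analogue of Claim~\ref{sub}: the quantity to be guaranteed is now $|E(X',T)|$ for $T:=\{v_2,v_3\}$ (rather than $|X'|$), and the sparsity requirement ``$|E(X')|\le\tfrac12|X'|$'' of Claim~\ref{sub} is replaced by ``$|E(X')|\le\tfrac14|E(X',T)|$''. Put $K:=\min\{d_2+d_3-|B|,\ \tfrac43(d_1+2d_2+2d_3-m)-2\}$. Since $B_0=\{v_2,v_3\}$ and $|V(H)\setminus V'|=|B|-3$, we have $|E(V',T)|=|E(B_0,V')|=e_0m'\ge d_2+d_3-2(|B|-3)$; as $|B|\le 2/\eps$ is negligible next to $m$, one checks $|E(V',T)|\ge K$. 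So I would pick $X'\subseteq V'$ with $|E(X',T)|\ge K$ and, subject to that, with $|E(X')|$ minimum, and then show $|E(X')|\le\tfrac14|E(X',T)|$.

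Minimality first forces that any $x\in X'$ with $|E(x,T)|\le|E(X',T)|-K$ is isolated in $G[X']$, since deleting it preserves feasibility and can only decrease $|E(X')|$; in particular, if $|E(X',T)|\ge K+2$ then $X'$ is edgeless and we are done, so assume $|E(X',T)|\in\{K,K+1\}$ and suppose toward a contradiction that $|E(X')|>\tfrac14|E(X',T)|\ge\tfrac14 K$. Then (writing $V_j:=\{v\in V':|E(v,T)|=j\}$) all edges of $X'$ lie inside $X'\cap V_2$, resp.\ $X'\cap(V_1\cup V_2)$, and running the vertex-switch argument of Claim~\ref{sub} --- replacing some $x\in X'$ carrying an internal edge by a $y\in Y':=V'\setminus X'$ with $|E(y,T)|\ge|E(x,T)|$ --- shows, as there, that minimality forces $|E(X',Y')|$ to be at least a fixed positive fraction of the ``missing'' $T$-edges $|E(V',T)|-|E(X',T)|$. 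Finally I count inside $H$: since $V'\subseteq V(H)=V\setminus\{v_1,v_2,v_3\}$ and $B\setminus\{v_1,v_2,v_3\}\subseteq V(H)$, the sets $E(X')$, $E(X',Y')$ and $E(B\setminus\{v_1,v_2,v_3\},V(H))$ are pairwise disjoint subsets of $E(H)$, so combining $|E(H)|\le m-d_1-d_2-d_3$, $|H|\ge d_1$, the inequality $|E(X')|>\tfrac14|E(X',T)|$, the lower bound on $|E(X',Y')|$, $|E(X',T)|\le K+1$ and $|E(V',T)|=e_0m'$ leads to $m-d_1-d_2-d_3>m-d_1-d_2-d_3$, the desired contradiction. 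Rewriting $d_1+2d_2+2d_3-m=(d_2+d_3)-(m-d_1-d_2-d_3)$ shows that the threshold that makes this go through is exactly $\tfrac43(d_1+2d_2+2d_3-m)-2$ --- the $\tfrac43$, versus the $2$ of Claim~\ref{sub}, reflecting the $\tfrac14$ versus $\tfrac12$ --- which is the second term of $K$; the first term $d_2+d_3-|B|$ is the trivial ceiling $|E(X',T)|\le|E(V',T)|$ and dominates when $V'$ is already sparse enough that no switching is needed.

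The main obstacle, relative to Claim~\ref{sub}, is that the constraint is now $|E(X',T)|\ge K$ rather than ``$|X'|$ fixed'': vertices count according to their valence $0$, $1$ or $2$ toward $T$, so deleting or switching a $T$-neighbour can break feasibility. This is why the argument splits into the slack case $|E(X',T)|\ge K+2$, where $X'$ is forced edgeless outright, and the tight case $|E(X',T)|\in\{K,K+1\}$, where the edge count in $H$ does the work --- and it is precisely this split that produces the minimum of two expressions in $K$. Getting the constants (the $\tfrac14$, the ``$-2$'', the ``$-|B|$'') to fit in the tight case, while absorbing the $\le|B|$ edges between $\{v_2,v_3\}$ and the rest of $B$ and the at most three edges inside $\{v_1,v_2,v_3\}$ into the available slack, is the genuinely delicate part.
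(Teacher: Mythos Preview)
Your plan is the right one and parallels the paper's: minimise $|E(X')|$ over feasible $X'$, use a switching obstruction to lower-bound $|E(X',Y')|$, then count edges in $H$. The structural difference is that the paper constrains $|E(X',T)|$ from \emph{both} sides, taking $X'$ with $|E(X',T)|$ in a window $[L,U)$ where $U\approx\tfrac43(d_1+2d_2+2d_3-m)$; the upper bound is chosen precisely so that $d_2+d_3-\tfrac34 U > m-d_1-d_2-d_3$ is the final line of the contradiction. You replace the upper bound by your slack/tight dichotomy: either $|E(X',T)|\ge K+2$ and $X'$ is edgeless, or $|E(X',T)|\le K+1$ plays the role of the upper bound.

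The gap is in the tight case. Your switch ``replace $x$ carrying an internal edge by $y\in Y'$ with $|E(y,T)|\ge|E(x,T)|$'' preserves the lower bound $|E(X',T)|\ge K$, but its obstruction is only: for each such $x$, every $y$ with $|E(y,T)|\ge|E(x,T)|$ satisfies $|E(y,X')|\ge|E(x,X')|$. This does \emph{not} give $|E(y,X')|\ge|E(y,T)|$ for all $y\in Y'$. For instance, if every internal edge of $X'$ lies inside $X'\cap V_2$ (which your own isolation argument shows can happen when $|E(X',T)|=K+1$), the obstruction says nothing at all about vertices $y\in Y'\cap V_1$, and $|E(X',Y')|$ could be as small as roughly $\tfrac12|E(Y',T)|$. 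But the final arithmetic needs the full inequality $|E(X',Y')|\ge|E(Y',T)|$: writing ``a fixed positive fraction'' and then invoking the exact threshold $\tfrac43(d_1+2d_2+2d_3-m)$ is inconsistent, since that threshold arises from $0.25|E(X',T)|+|E(Y',T)|\ge d_2+d_3-0.75|E(X',T)|$, which uses coefficient $1$ on $|E(Y',T)|$. The paper's two-sided window is what makes the cleaner switch available: with an upper bound in place one may add $y$ (raising $|E(X',T)|$) and remove an $x$ found by averaging from $2|E(X')|>\tfrac12|E(X',T)|$, yielding $|E(y,X')|\ge|E(y,T)|$ directly.

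Two smaller slips: your sub-cases are interchanged (when $|E(X',T)|=K$ only $V_0$-vertices are forced isolated, so edges may lie in $X'\cap(V_1\cup V_2)$; it is when $|E(X',T)|=K+1$ that they are confined to $X'\cap V_2$); and your feasibility check is too quick, since $|E(V',T)|\ge d_2+d_3-2(|B|-3)$ can fall below $d_2+d_3-|B|$ when $|B|>6$, so one must argue via the second term of $K$.
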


Let $X'\subseteq V'\cap (N(v_2),N(v_3))$ with 
\begin{multline*}
  \min\{d_2+d_3-|B|, \tfrac{4}{3}(d_1+2d_2+2d_3-m-2)\}\le |E(X',\{ v_2,v_3\} )|\\
< \tfrac{4}{3}(d(v_1)+2d(v_2)+2d(v_3)-m),
\end{multline*}
such that $|E(X')|$ is minimal.
Let $Y'=V'\setminus X'$. 
 If $|E(X')|>0.25|E(X',\{ v_2,v_3\} )|$, then $|E(y,X')|\ge |E(y,\{ v_2,v_3\} )|$ for all $y\in Y'$, as otherwise we could reduce $|E(X')|$ by a vertex switch. 
Thus,
\begin{multline*}
m-d_1-d_2-d_3\ge |E(H)|\ge |E(X')|+|E(X',Y')|+\tfrac{1}{2}(|B|-3)\eps m\\
>0.25|E(X',\{ v_2,v_3\} )|+|E(Y',\{ v_2,v_3\} )|+\tfrac{1}{2}(|B|-3)\eps m\\
\ge d_2+d_3-0.75|E(X',\{ v_2,v_3\} )|\\
> m-d_1-d_2-d_3
\end{multline*}
a contradiction showing the claim.

\begin{claim}\label{c4}
 There is a set $X\subseteq V'$ such that
 $|E(X,\{ v_2,v_3\} )|\ge s+2$, and
\[|E(X)|\le \max\{ 0.5s-\tfrac{1}{4}(d_2+d_3-|B|)+1.5,1.5s-\tfrac{1}{3}(d_1+2d_2+2d_3)+2.5\}.\]
\end{claim}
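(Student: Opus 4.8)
The idea is to run, with only cosmetic changes, the vertex–deletion argument from the proof of Claim~\ref{c2}, now measuring $|E(X,\{v_2,v_3\})|$ in place of $|X|$ and invoking Claim~\ref{c3} where Claim~\ref{c2} used Claim~\ref{sub}. First I would fix a set $X'\subseteq V'$ as supplied by Claim~\ref{c3}; writing $t:=|E(X',\{v_2,v_3\})|$ we have
\[ t\ge \min\Big\{\,d_2+d_3-|B|,\ \tfrac43(d_1+2d_2+2d_3-m)-2\,\Big\}\qquad\text{and}\qquad |E(X')|\le\tfrac14 t. \]
Using $m=3s+1$, the inequalities $d_2+d_3>0.51m$ and $d_1\ge d_2\ge\tfrac12(d_2+d_3)$ (whence $d_1+2d_2+2d_3\ge\tfrac52(d_2+d_3)$), together with $|B|<2/\eps$ and the fact that $s$ is enormous, one checks that \emph{both} quantities in the minimum exceed $s+4$, so $t>s+4$; in particular $X'$ has many vertices.

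Then I would delete vertices from $X'$ one at a time, at each step removing a vertex of maximum degree in the current set, and stop at the first moment the remaining set $X$ satisfies $|E(X,\{v_2,v_3\})|\le s+3$. Every vertex of $X'$ lies in $N(v_2)\cup N(v_3)$, so deleting one lowers $|E(\cdot,\{v_2,v_3\})|$ by $1$ or $2$; hence, since the value was $\ge s+4$ one step earlier, at the stopping time $s+2\le |E(X,\{v_2,v_3\})|\le s+3$, which is the first assertion of the claim. If $k$ denotes the number of deletions performed, then $2k\ge t-(s+3)$, i.e.\ $k\ge\tfrac12(t-s-3)$. Moreover, as long as the current set still spans an edge its maximum-degree vertex has degree at least $1$, so each deletion destroys at least one edge of the induced subgraph; consequently either $|E(X)|=0$, or
\[ |E(X)|\ \le\ |E(X')|-k\ \le\ \tfrac14 t-\tfrac12(t-s-3)\ =\ \tfrac12 s+\tfrac32-\tfrac14 t. \]

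It remains to insert the lower bound on $t$ and simplify. Replacing $t$ by the minimum of Claim~\ref{c3} bounds $\tfrac12 s+\tfrac32-\tfrac14 t$ by the larger of $\tfrac12 s-\tfrac14(d_2+d_3-|B|)+\tfrac32$ and $\tfrac12 s-\tfrac14\big(\tfrac43(d_1+2d_2+2d_3-m)-2\big)+\tfrac32$; since $\tfrac13 m=s+\tfrac13$, the latter equals $\tfrac32 s-\tfrac13(d_1+2d_2+2d_3)+\tfrac73\le 1.5s-\tfrac13(d_1+2d_2+2d_3)+2.5$, which is exactly the bound in Claim~\ref{c4}. The leftover possibility $|E(X)|=0$ is harmless: from $d_1\ge\tfrac12(d_2+d_3)$ and $d_1+d_2+d_3\le m$ one gets $d_2+d_3\le\tfrac23 m\le 2s$, so the first of the two terms in that maximum is always at least $\tfrac32>0$.

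I do not anticipate a genuine difficulty here: everything reduces to Claim~\ref{c3} plus elementary counting. The only place needing care is the stopping rule — because a single deletion can lower $|E(X,\{v_2,v_3\})|$ by two, one must check the process does not overshoot the target window $[s+2,s+3]$, which is immediate since the value just before the final deletion is at least $s+4$. A secondary point worth a line is verifying $t>s+4$, so that the deletion process is non-vacuous and terminates inside that window.
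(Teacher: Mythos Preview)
Your argument is correct and follows essentially the same route as the paper's: start from the set $X'$ of Claim~\ref{c3}, peel off vertices until $|E(X,\{v_2,v_3\})|\le s+3$, and bound $|E(X)|$ by $\tfrac14 t-\tfrac12(t-s-3)$. The only cosmetic difference is the deletion rule---you remove a vertex of maximum degree in the current set, whereas the paper removes one maximizing $|E(v,X)|/|E(v,\{v_2,v_3\})|$---but since $|E(v,\{v_2,v_3\})|\le 2$, both rules guarantee that each deletion (while edges remain) kills at least one internal edge per two units of drop in $|E(\cdot,\{v_2,v_3\})|$, so the resulting inequality is identical.
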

Start with a set $X'$ from Claim~\ref{c3}, and succesively delete vertices maximizing $\frac{|E(v,X)|}{|E(v,\{v_2,v_3\})|}$ until $|E(X,\{v_2,v_3\})|\le s+3$. Then either $|E(X)|=0$ or
\begin{multline*}
 |E(X)|\le |E(X')|-0.5(|E(X',\{v_2,v_3\})|-s-3)\\
\le -0.25|E(X',\{ v_2,v_3\} )|+0.5(s+3)\\
\le  0.5s+1.5-\min\{ \tfrac{1}{4}(d_2+d_3-|B|),\tfrac{1}{3}(d_1+2d_2+2d_3-m-2)\}\\
\le \max\{ 0.5s-\tfrac{1}{4}(d_2+d_3-|B|)+1.5,1.5s-\tfrac{1}{3}(d_1+2d_2+2d_3)+2.5\},
\end{multline*}
showing the claim.

We want to apply Lemma~\ref{easy} to this situation with $A_1=\{v_2,v_3\}$, $C=X$ for a maximal $X$, and $A_2=V\setminus(A_1\cup C)$. 
Conditions \eqref{a}, \eqref{d} and \eqref{e} are clearly satisfied. 
For condition~\eqref{b} note that by the maximality of $|X|$, every vertex in $A_2\setminus B$ has a neighbor in $X$, so in particular, $ |E(A_2,C)|\ge d_1-|B|\ge d_1-2\eps m$, and so
\[
 |E(A_1,V)|+|E(A_2,C)|\ge d_1+d_2+d_3-2-2\eps m>0.7m>2s+\Delta_2.
\]
If condition~\eqref{c} holds, we are done by Lemma~\ref{easy}. Finally, if condition~\eqref{c} fails, we have
\begin{align*}
 d_2+d_3&\ge |E(A_1,V)|\\
&>|E'|-s+1-\Delta_2\\
&=2s+2-|E(X)|-\Delta_2\\
&\ge \min\{ 1.49s+ \tfrac{1}{4}(d_2+d_3),0.49s+\tfrac{5}{6}(d_2+d_3)\},
\end{align*}
and therefore
\[
 d_2+d_3>1.98s,~d_1+d_2+d_3>2.97s,\mbox{ and }d_3>0.96s.
\]
Now it is easy to construct a weighting with guarding set $E(\{v_1,v_2,v_3\},V)$, starting with $w(v_1)=0$, $w(v_2)=s$ and $w(v_3)=\lceil 0.5s\rceil$.

%

\begin{case}
$|B_0|\ge 3$ and $e_0+e_S\ge 0.86$.
\end{case}
Since $|B_0|\ge 3$, we have $e_S\ge \tfrac{2}{3}e_0$. Thus, this case covers all remaining situations with $e_0\ge 0.52$. Let $A_1=B_0$, $A_2=B_S$, and $C=V\setminus (A_1\cup A_2)$. 
Then $|E'|\ge (e_0+e_S)m'>2.52s$ and $\Delta_i<\eps m$.
All conditions of Lemma~\ref{easy} apply but possibly~\eqref{c}.
If~\eqref{c} fails, we have
\[
 s-1+\Delta_2>|E(B_S)|+|E(B_S,C)|
\ge e_sm'> 0.344m'>s+\Delta_2,
\]
a contradiction finishing the case.

For the last case, we will drop the assumption of $e_0\ge e_S$ to be able to use symmetry in a different place.

\begin{case}$\max\{ e_0,e_S\} \le 0.52$.\end{case}

Let $w(v)=0$ for $v\in B_0$, $w(v)=s$ for $v\in B_S$, and determine $w(v)$ for all other 
vertices independently at random with
\[
 P_i:=\pr \left( w(v)=s\frac{i}{20}\right) =
\begin{cases}
 (19-30e_0)\beta^{-1} & i=0\\
 \beta^{-1} & 0<i<20\\
 (19-30e_S)\beta^{-1} & i=20\\
 0 & \mbox{otherwise}
\end{cases}
,
\]
where $\beta=57-30(e_0+e_S)$ and $i\in \Z$.

The set $E'=E\setminus E(B)$ is guarding for the resulting weighting, if for $0\le i\le 39$,
\[
 \tfrac{i+1}{20}s\le |\{ e\in E'\mid w(e)\le  s\tfrac{i}{20}\}|\le \tfrac{i}{20}s+s+1-|E(B)|.
\]
To show that $E'$ has a positive probability of being a guarding set, 
we will use Azuma's inequality. For this, let us first consider the 
expected number of edges of the particular weights 
\[
 X_i:=|\{ e\in E'\mid w(e)\le  s\tfrac{i}{20}\}|
\]
and find values $\delta_i,\hat{\delta}_i\in (0,0.01)$ 
such that for $0\le i \le 39$, 
\begin{equation}\label{bound1}
 \ex(X_i)\ge
\begin{cases}
 \tfrac{i+1}{20}s+\delta_im', &\mbox{ if }e_0\ge e_S,\\
 \tfrac{i+1}{20}s+\hat{\delta}_im', &\mbox{ if }e_0\le e_S,\\
 \tfrac{i+1}{20}s+0.2m', &\mbox{ if }20\le i\le 39,
\end{cases}
\end{equation}
and
\begin{equation}\label{bound2}
 \ex(X_i)\le
\begin{cases}
 \tfrac{i}{20}s+s+1-|E(B)|-\delta_im', &\mbox{ if }e_0\ge e_S,\\
 \tfrac{i}{20}s+s+1-|E(B)|-\hat{\delta}_im', &\mbox{ if }e_0\le e_S,\\
 \tfrac{i}{20}s+s+1-|E(B)|-0.2m', &\mbox{ if }0\le i\le 19.
\end{cases}
\end{equation}

%

By symmetry (change the sets $B_0$ and $B_S$), we can set $\delta_i=\hat{\delta}_{39-i}$, so we
only have to treat the cases $0\le i\le 19$.
For every edge $uv\in E'$, we get 
\[
 \pr (w(uv)=s\tfrac{i}{20})=\sum P_kP_{i-k}=2P_0P_i+(i-1)P_i^2.
\]
For $uv\in E'(B_0,V)$, we get
$\pr (w(uv)=s\frac{i}{20})=P_i$, and for $uv\in E'(B_S,V)$, we get
$\pr (w(uv)=s\frac{i}{20})=P_{i-20}=0$. Thus, 
\begin{multline*}
 \tfrac{1}{m'}\ex(X_i)=
\sum_{j=0}^i (e_0P_j+(1-e_0-e_S)(2P_0P_j+(j-1)P_j^2))\\
= e_0(P_0+iP_1)+(1-e_0-e_S)(P_0^2+2iP_0P_1+\tfrac{i^2-i}{2}P_1^2).
%
%
\end{multline*}

Fixing $i$ and taking the partial derivatives, your favorite computer algebra program tells you that
$\frac{d}{de_0}\ex(X_i)=0$ if and only if
\[
e_0=f_1(e_S):=-\frac{p_0+p_1e_S+p_2e_S^2}{
 p_3+p_4e_S},
\]
and $\frac{d}{de_S}\ex(X_i)=0$ if and only if
\[
e_0=f_2(e_S):=
\frac{p_5+p_6e_S-\sqrt{p_7+p_8e_S+p_9e_S^2}}{p_{10}},
\]
where the $p_j$ are polynomials in $i$ (see Appendix~\ref{polys}).
\begin{figure}[ht]
 \scalebox{0.55}{\includegraphics{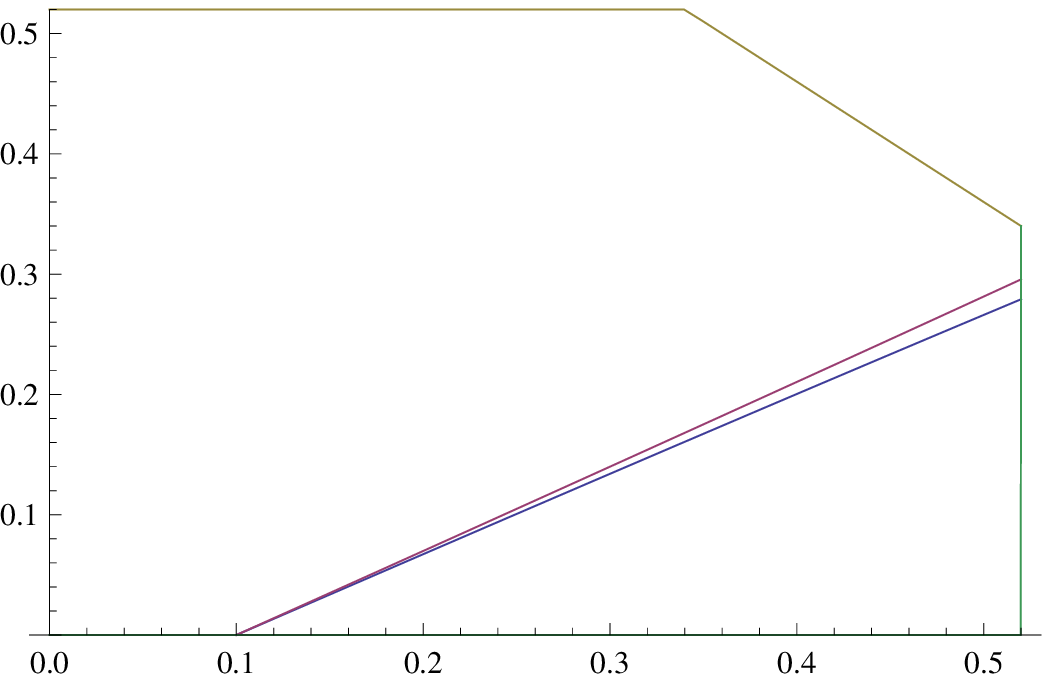}}\hspace{1cm}\scalebox{0.55}{\includegraphics{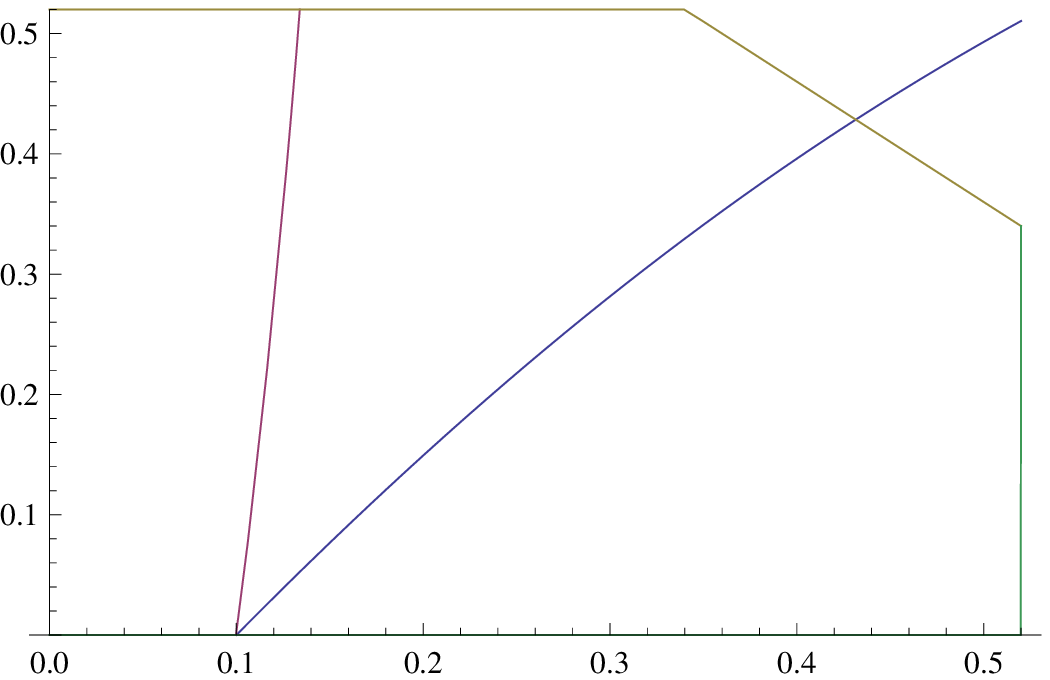}}
\caption{Plots of $f_1$ and $f_2$ for $i=3$ and $i=19$}
\end{figure}

We have $f_1\ge f_2$ and
\[
 (\tfrac{d}{de_0},\tfrac{d}{de_S})\ex(X_i)=\begin{cases}
                                  (<0,>0),&  \mbox{ for }e_0> f_1(e_S),\\
                                  (>0,>0),&  \mbox{ for }f_2(e_S) < e_0 <f_1(e_S),\\
                                  (>0,<0),&   \mbox{ for }e_0< f_2(e_S).
                                   \end{cases}
\]
%
We conclude that the minimum of $\ex(X_i)$ in the considered area with $e_0\le e_S$ occurs in
$
 \left( e_0,e_S\right)= \left( 0,0.52\right)
$
or on the line $e_0=e_S$, and similarly, the minimum of $\ex(X_i)$ in the considered area with $e_0\ge e_S$ occurs in
$
 \left( e_0,e_S\right)= \left( 0.52,0\right)
$
or on the line $e_0=e_S$.
On this line, i.e., $0\le e_0=e_S\le 0.43$, the minimum is attained at 
\[
 e_0=e_S=
\begin{cases}
 0.43, & \mbox{ for }0\le i\le 6,\\
0, & 
\mbox{ for }7\le i\le 19.
\end{cases}
\]

Similarly, the maximum on the line $e_0=0.86-e_S$ is an upper bound for the maximum in the considered area, 
and this maximum is attained at 
\[
\left( e_0,e_S\right)=\left(\tfrac{361+19i}{1350},\tfrac{800-19i}{1350}\right).
\]
Computing the four values for each $i$ with $0\le i\le 19$, we find that (see Appendix~\ref{vals}) we can choose $\delta_i$ and $\hat{\delta}_i$ as follows:
\[
 \begin{array}[]{l|c|c|c|c|c|c|c|c|c|c}
  \hfill i\hfill& 0&1&2&3&4&5&6&7&8&9\\
\hline
\rule{0ex}{2.5ex} 1000~\delta_i&29&26&24&22&21&20&19&18&17&17\\
1000~\delta_{i+10}&18&18&19&20&21&22&24&26&29&31\\
\hline
\rule{0ex}{2.5ex} 1000~\hat{\delta}_i&72&71&70&66&61&56&51&46&42&38\\
1000~\hat{\delta}_{i+10}&34&31&28&25&23&20&18&17&15&14
 \end{array}
\]
satisfying~\eqref{bound1} and~\eqref{bound2}. For this, note that (see Appendix~\ref{vals})
\[
\ex(X_i)\le 
\tfrac{i}{20}s+s-0.22m'<\tfrac{i}{20}s+s-|E(B)|-0.2m'.
\]

Now we are ready to use Azuma's inequality (cf.~\cite{MR}):
\begin{theorem}{(Azuma's inequality)}
 Let $X$ be a random variable determined by $n$ trials $T_1,\ldots ,T_n$, such that for each $j$, and any two
possible sequences of outcomes $t_1,\ldots,t_j$ and $t_1,\ldots,t_{j-1},t_j'$:
\[
 |\ex(X|T_1=t_1,\ldots,T_j=t_j)-\ex(X|T_1=t_1,\ldots,T_j=t_j')|\le c_j,
\]
 then for all $\bar{t},\underline{t}>0$
\[
 \pr(X-\ex (X)\ge \bar{t})+\pr(\ex (X)-X\ge \underline{t})\le e^{-{\bar{t}^2}/(2\sum{c_j^2})}+e^{-{\underline{t}^2}/(2\sum{c_j^2})}. 
\]
\end{theorem}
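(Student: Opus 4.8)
The plan is to prove Azuma's inequality by the Doob martingale together with the exponential moment (Chernoff) method. First I would fix a random variable $X$ determined by the trials $T_1,\ldots,T_n$ and introduce the Doob martingale $Z_j:=\ex(X\mid T_1,\ldots,T_j)$ for $0\le j\le n$, so that $Z_0=\ex(X)$ and $Z_n=X$. With respect to the filtration generated by the trials this is a martingale by the tower property, and the quantity we want to control is $X-\ex(X)=\sum_{j=1}^n D_j$, where $D_j:=Z_j-Z_{j-1}$ is the martingale difference sequence.

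The first real step is to turn the hypothesis into a bound on the increments. Conditioning on any outcome $T_1=t_1,\ldots,T_{j-1}=t_{j-1}$, the value of $Z_j$ ranges, as $T_j$ varies, over numbers of the form $\ex(X\mid T_1=t_1,\ldots,T_{j-1}=t_{j-1},T_j=t_j)$, any two of which differ by at most $c_j$; and $Z_{j-1}$ is precisely the conditional average of these numbers. Hence, conditioned on $T_1,\ldots,T_{j-1}$, the increment $D_j$ is a mean-zero random variable supported in an interval of length at most $c_j$, and in particular $|D_j|\le c_j$.

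Next I would run the standard exponential-moment argument. Hoeffding's lemma, proved by bounding $e^{\lambda x}$ by its chord on the relevant interval via convexity and then analyzing the resulting function of $\lambda$, gives $\ex\big(e^{\lambda D_j}\mid T_1,\ldots,T_{j-1}\big)\le e^{\lambda^2 c_j^2/2}$ for every $\lambda>0$. Peeling off the conditional expectations one at a time by the tower property yields $\ex\big(e^{\lambda(X-\ex X)}\big)\le \prod_{j=1}^n e^{\lambda^2 c_j^2/2}=e^{\lambda^2\sum_j c_j^2/2}$. Markov's inequality then gives $\pr(X-\ex X\ge \bar t)\le e^{-\lambda\bar t+\lambda^2\sum_j c_j^2/2}$ for all $\lambda>0$, and optimizing with $\lambda=\bar t/\sum_j c_j^2$ produces the bound $e^{-\bar t^2/(2\sum_j c_j^2)}$. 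Running the same computation for $-(X-\ex X)$, equivalently replacing $\lambda$ by $-\lambda$, bounds $\pr(\ex X-X\ge\underline t)$ by $e^{-\underline t^2/(2\sum_j c_j^2)}$, and adding the two tail estimates gives the claimed inequality.

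The only delicate points are Hoeffding's lemma itself, a short convexity computation, and the verification that the hypothesized Lipschitz-type condition on the conditional expectations really forces $|D_j|\le c_j$; I expect the latter averaging observation to be the main thing to get exactly right, since everything after it is the routine Chernoff-and-union-bound boilerplate. Since the trials here have finitely many outcomes there are no measure-theoretic subtleties, and one could even sharpen the per-step constant from $c_j^2/2$ to $c_j^2/8$ using the interval-length refinement, though the stated form does not require it.
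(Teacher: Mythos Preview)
Your argument is the standard and correct proof of Azuma's inequality via the Doob martingale and the exponential-moment method, and the key step---observing that, conditional on $T_1,\ldots,T_{j-1}$, the values of $Z_j$ range over a set of diameter at most $c_j$ whose average is $Z_{j-1}$, so that $D_j$ is mean-zero and supported in an interval of length $c_j$---is handled correctly. There is nothing to compare against in the paper: the theorem is quoted from Molloy and Reed~\cite{MR} as a black box and is not proved there, so your write-up in fact supplies more than the paper does.
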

In our application, $T_j$ is the weight of the $j^{th}$ vertex in $V'$, and 
$X=X_i$. 
As the weight of one vertex in $v\in V'$ changes the value (and thus the expectation) 
of an $X_i$ by at most $d(v)\le \eps m$,
we have 
\begin{multline*}
 \pr(X_i-\ex (X_i)\ge 0.2m')+\pr(\ex (X_i)-X_i\ge tm')\\
\le e^{-(0.2m')^2/({2\sum_{v\in V'}{d(v)^2}})}+e^{-(tm')^2/({2\sum_{v\in V'}{d(v)^2}})}\\
\le  e^{-{(0.2m')^2}/{(2\eps m\sum_{v\in V'}{d(v)})}}+e^{-{(tm')^2}/{(2\eps m\sum_{v\in V'}{d(v)})}}\\
\le  e^{-{(0.04m')(0.99m)}/{4\eps mm'}}+e^{-{(t^2m')(0.99m)}/{4\eps mm'}}\\
= e^{-{0.0099}/{\eps }}+e^{-{0.99t^2}/{4\eps}}.
\end{multline*}
Thus, as $\eps=2.7\times 10^{-5}$,
\begin{multline*}
 \pr(\mbox{inequality \eqref{bound1} or \eqref{bound2} fails for some }i)\\
\le 40e^{-{0.0099}/{\eps }}+\sum_{i=0}^{19}\left( e^{-{0.99\delta_i^2}/{4\eps}}+e^{-{0.99\hat{\delta}_i^2}/{4\eps}}\right) <1.
\end{multline*}

Therefore, there is a choice of the $T_j$ such that none of the 
$X_i$ falls out of the given range. This yields a well guarded vertex weighting.
\qed

\section{Graphs with small maximum degree}\label{maxdegproof}
With the same methods as above we can improve on the bound in Theorem~\ref{brandt} as stated in Theorem~\ref{maxdeg}. Here we give only a proof sketch.
\begin{proof}
Let $\eps =2.3\times 10^{-4}>\frac{1}{4350}$.
 We proceed as in Case~4 of the proof of Theorem~\ref{main} and note that we have $B=\emptyset$, $e_0=e_S=0$ and $m=m'$.
This yields with the same calculations as above, that we can choose
$\delta_i=\hat{\delta}_i$ as follows:
\[
 \begin{array}[]{l|c|c|c|c|c|c|c|c|c|c}
  \hfill i\hfill& 0&1&2&3&4&5&6&7&8&9\\
\hline
1000~\delta_i&94&89&84&80&76&72&69&66&63&60\\
1000~\delta_{i+10}&58&56&55&53&52&52&51&51&52&52
 \end{array}
\]
satisfying~\eqref{bound1} and~\eqref{bound2}.
The same calculation as above involving Azuma's inequality
yields the theorem.
\end{proof}

\bibliographystyle{amsplain}

\begin{thebibliography}{99}

\bibitem{BJMR} M. Ba\u{c}a, S.~Jendrol', M.~Miller and J.~Ryan, On irregular total labelings, {\em Discrete Math.} {\bf 307} (2007), 1378--1388.

\bibitem{BMR} S.~Brandt, J.~Mi\u{s}kuf and D.~Rautenbach, On a Conjecture about Edge Irregular Total Labellings, {\em J. Graph Theory} {\bf 58} (2008), 333--343.

\bibitem{BMR2} S.~Brandt, J.~Mi\u{s}kuf and D.~Rautenbach, Edge Irregular Total Labellings for Graphs of Linear Size, {\em Discrete Math.} {\bf 309} (2009), 3786--3792. 

\bibitem{BRS} S.~Brandt, D.~Rautenbach and M.~Stiebitz, Edge colouring by total labellings, {\em Discrete Math.} {\bf 310} (2010), 199--205.

\bibitem{D} R.~Diestel, Graph Theory, Springer 1997.

\bibitem{IJ} J.~Ivan\u{c}o and S.~Jendrol', Total edge irregulrity strength of trees, {\em Discuss. Math. Graph Theory} {\bf 26} (2006), 449--456.

\bibitem{JMS} S.~Jendrol', J. ~Mi\u{s}kuf and R.~Sot\'{a}k, Total Edge Irregularity Strength of Complete Graphs and Complete Bipartite Graphs, {\em Electronic Notes in Discrete Math.}
{\bf 28} (2007), 281--285 .

\bibitem{MR} M.~Molloy and B.~Reed, Graph Colouring and the Probabilistic Method, Springer 2002.




%


\end{thebibliography}

\appendix

\section{Expected numbers of edges with certain weights}\label{vals}
Let $e^*=\frac{-361+18i+i^2}{20(152-36i+i^2)}$ and $\bar{e}=\frac{361+19i}{1350}$. For given $(e_0,e_S)$, let
$
 Y_i:=\tfrac{1}{m'}~X_i
$.
Then we calculate the following values for $Y_i-\frac{i+1}{60\times 0.99}$ (and in the last column for $\frac{i+20}{60}-Y_i$):
\[
 \begin{array}{|c|c|c|c|c|c|c|}
  \hline 
\rule{0ex}{2.5ex}&\multicolumn{6}{c |}{(e_0,e_S)}\\ \cline{2-7}
\rule{0ex}{2.5ex}\hfill i\hfill& (0,0.52) & (0,0) & (e^*,e^*) &  (0.43,0.43) & (0.52,0) & (\bar{e},0.86-\bar{e}) \\
\hline
 \rule{0ex}{2.5ex}0 & 0.0842642 & 0.0942761 & 0.0945847 &
{\bf0.072}5870 & {\bf0.029}1077 & 0.221914\\
 1 & 0.0780712 & 0.0891370 & 0.0894879 & {\bf0.071}2887 & {\bf0.026}7374 & 0.226687\\
 2 & 0.0721583 & 0.0843057 & 0.0847193 & {\bf0.070}1341 & {\bf0.024}6472 & 0.230987\\
 3 & {\bf0.066}5254 & 0.0797821 & 0.0803057 & 0.0691234 & {\bf0.022}8371 & 0.234814\\
 4 & {\bf0.061}1725 & 0.0755664 & 0.0763530 & 0.0682565 & {\bf0.021}3070 & 0.238167\\
 5 & {\bf0.056}0997 & 0.0716584 & 0.0741222 & 0.0675334 & {\bf0.020}0569 & 0.241046\\
 6 & {\bf0.051}3070 & 0.0680582 & 0.0669080 & 0.0669542 & {\bf0.019}0869 & 0.243452\\
 7 & {\bf0.046}7943 & 0.0647658 & 0.0644259 & 0.0665187 & {\bf0.018}3969 & 0.245385\\
 8 & {\bf0.042}5617 & 0.0617812 & 0.0616330 & 0.0662271 & {\bf0.017}9871 & 0.246844\\
 9 & {\bf0.038}6092 & 0.0591044 & 0.0590379 & 0.0660793 & {\bf0.017}8572 & 0.247830\\
10 & {\bf0.034}9366 & 0.0567354 & 0.0567098 & 0.0660753 & {\bf0.018}0074 & 0.248342\\
11 & {\bf0.031}5442 & 0.0546742 & 0.0546683 & 0.0662151 & {\bf0.018}4377 & 0.248381\\
12 & {\bf0.028}4318 & 0.0529207 & 0.0529207 & 0.0664988 & {\bf0.019}1480 & 0.247946\\
13 & {\bf0.025}5994 & 0.0514750 & 0.0514703 & 0.0669263 & {\bf0.020}1384 & 0.247038\\
14 & {\bf0.023}0471 & 0.0503372 & 0.0503181 & 0.0674976 & {\bf0.021}4088 & 0.245657\\
15 & {\bf0.020}7749 & 0.0495071 & 0.0494643 & 0.0682127 & {\bf0.022}9593 & 0.243802\\
16 & {\bf0.018}7827 & 0.0489848 & 0.0489084 & 0.0690716 & {\bf0.024}7898 & 0.241473\\
17 & {\bf0.017}0705 & 0.0487703 & 0.0486493 & 0.0700744 & {\bf0.026}9004 & 0.238671\\
18 & {\bf0.015}6384 & 0.0488635 & 0.0486852 & 0.0712209 & {\bf0.029}2910 & 0.235396\\
19 & {\bf0.014}4864 & 0.0492646 & 0.0490139 & 0.0725113 & {\bf0.031}9617 & 0.231647\\
\hline
 \end{array}
\]

\section{Polynomials used in Case 4 of the proof}\label{polys}
\begin{align*}
 p_0(i)&= 1444+39i+i^2  \\
p_1(i)&= 10~(-1558-45i+i^2)  \\
p_2(i)&= 600~(19+i)  \\
p_3(i)&=  10~(2660-147i+i^2) \\
p_4(i)&=  600~(-35+i)\\
p_5(i)&=  10~(1216+27i-i^2) \\
p_6(i)&=  600~(19+i) \\
p_7(i)&=  100~(2085136+111336 i- 2663 i^2-78 i^3+ i^4)\\
p_8(i)&=  12000~( -27436-2077i+88 i^2+ i^3)\\
p_9(i)&=  360000~(361+38 i+ i^2) \\
p_{10}(i)&=   1200~(35 -  i)
\end{align*}


\end{document}